\setlist{topsep=3pt,partopsep=0pt,itemsep=1pt,parsep=0pt}
\newtheorem{Theorem}{Theorem}
\newtheorem{Lemma}{Lemma}
\newcommand{\Z}{\mathbb{Z}}
\newcommand{\B}{\mathcal{B}}
\newcommand{\F}{\mathcal{F}}
\def \leq {\leqslant}
\def \geq {\geqslant}
\let\oldproofname=\proofname
\renewcommand{\proofname}{\rm\bf{\oldproofname}}
\begin{document}

\title{The existence of cyclic $(v,4,1)$-designs}

\author[a]{Menglong Zhang}
\author[a]{Tao Feng}
\author[b]{Xiaomiao Wang}
\affil[a]{School of Mathematics and Statistics, Beijing Jiaotong University, Beijing 100044, P.R. China}
\affil[b]{School of Mathematics and Statistics, Ningbo University, Ningbo 315211, P.R. China}
\affil[ ]{mlzhang@bjtu.edu.cn; tfeng@bjtu.edu.cn; wangxiaomiao@nbu.edu.cn}

\renewcommand*{\Affilfont}{\small\it}
\renewcommand\Authands{ and }
\date{}

\maketitle

\footnotetext{Supported by NSFC under Grant 11871095 (Tao Feng), NSFC under Grant 11771227 and Zhejiang Provincial Natural Science Foundation of China under Grant LY21A010005 (Xiaomiao Wang).}

\begin{abstract}
Even though Peltesohn proved that a cyclic $(v,3,1)$-design exists if and only if $v\equiv 1,3\pmod{6}$ as early as 1939, the problem of determining the spectrum of cyclic $(v,k,1)$-designs with $k>3$ is far from being settled, even for $k=4$. This paper shows that a cyclic $(v,4,1)$-design exists if and only if $v\equiv 1,4\pmod{12}$ and $v\not\in\{16,25,28\}$.
\end{abstract}

\noindent {\bf Keywords}: cyclic design; difference family; optical orthogonal code

\section{Introduction}

Let $X$ be a set of $v$ {\em points}, and $\B$ be a collection of $k$-subsets
of $X$ called {\em blocks}. A pair $(X,\B)$ is called a {\em $(v,k,1)$-design} if every pair of distinct elements of $X$ is contained in exactly one block of $\B$. Kirkman \cite{Kirkman} in 1847 showed that a $(v,3,1)$-design exists if and only if $v\equiv 1,3\pmod{6}$, and Hanani \cite{Hanani} in 1961 showed that a $(v,4,1)$-design exists if and only if $v\equiv 1,4\pmod{12}$.

An {\em automorphism} of a $(v,k,1)$-design $(X, \B)$ is a permutation on $X$ leaving $\B$ invariant. A $(v,k,1)$-design is said to be {\em cyclic} if it admits an automorphism consisting of a cycle of length $v$. Without loss of generality we identify $X$ with ${\mathbb Z}_v$, the additive group of integers modulo $v$. The blocks of a cyclic $(v,k,1)$-design can be partitioned into orbits under ${\mathbb Z}_{v}$. We can choose any fixed block from each orbit and then call these {\em base blocks}. If the cardinality of an orbit is equal to $v$, the orbit is {\em full}. Otherwise, it is {\em short}. If $\gcd(v,k)=1$, then all orbits of a cyclic $(v,k,1)$-design are full (see \cite[Lemma 1]{k}).

The existence problem for cyclic $(v,3,1)$-designs is equivalent to Heffter's difference problems. To generalize one of Netto's constructions \cite{Netto} in 1893 for cyclic $(v,3,1)$-designs, Heffter \cite{Heffter1896} in 1896 introduced his famous first difference problem that is related to constructions for cyclic $(v,3,1)$-designs with $v\equiv 1\pmod{6}$, and a year later both the first and second difference problems appeared \cite{Heffter1897}. Heffter's difference problems were eventually solved in 1939 by Peltesohn \cite{Peltesohn}.

\begin{Theorem}\label{thm:CDF-CD}{\rm \cite{Peltesohn}}
There exists a cyclic $(v,3,1)$-design if and only if $v\equiv 1,3\pmod{6}$ and $v\neq 9$.
\end{Theorem}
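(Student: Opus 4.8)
\medskip
\noindent\textbf{Proof proposal.}\ The plan is to pass from the design to a set of base blocks, reduce the existence question to Heffter's two difference problems, and then solve those by explicit parametric constructions. Throughout, identify the point set with $\Z_v$. For necessity, a standard count suffices: in a $(v,3,1)$-design each point lies in $r=(v-1)/2$ blocks and there are $b=v(v-1)/6$ blocks in all, so $v$ must be odd with $3\mid v(v-1)$, which is exactly $v\equiv 1,3\pmod 6$. The extra exclusion $v\neq 9$ will drop out of the reformulation below, so I defer it.

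For sufficiency, write $v=6t+1$ or $v=6t+3$ and examine the block orbits under $\Z_v$. If $v=6t+1$ then $\gcd(v,3)=1$, so by the lemma quoted above every orbit is full, and a cyclic $(v,3,1)$-design is the same thing as a choice of $t$ base triples whose $\pm$-differences partition $\Z_v\setminus\{0\}$; equivalently, a partition of $\{1,2,\dots,3t\}$ into $t$ triples $\{a,b,c\}$ with $a+b=c$ or $a+b+c=6t+1$. If $v=6t+3$ then $\gcd(v,3)=3$; here I would first take the base block $B_0=\{0,\ 2t+1,\ 4t+2\}$, whose orbit is short, of length $2t+1$, and whose translates cover exactly the $v$ pairs at difference $\pm(2t+1)$, and the remaining full orbits then amount to a partition of $\{1,\dots,3t+1\}\setminus\{2t+1\}$ into $t$ triples $\{a,b,c\}$ with $a+b\equiv\pm c\pmod{6t+3}$. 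In either case the problem is reduced to one of Heffter's difference problems for the parameter $t$.

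To dispatch the difference problems I would write down explicit families of triples, case-splitting on $t$ modulo a small integer (in practice $t\bmod 4$, with a finer split in one or two sub-cases). The workhorses are triples of the forms $\{i,\ m-i,\ m\}$ and $\{i,\ j,\ i+j\}$, with parameters tuned so that the two smaller entries across all triples tile an initial interval of the ground set while the larger entries tile a disjoint interval above it; only a bounded number of triples near the junction, and near the omitted value $2t+1$ in the second problem, need to be written out by hand to mop up the few leftover elements. For each residue class one then checks that the listed triples are pairwise disjoint, cover the prescribed set, and satisfy their sum relations. Finitely many small $t$ fall outside the generic families and are settled by inspection; the only obstruction one finds is $t=1$ in the second problem, where a single triple $\{1,2,4\}$ with $a+b\equiv\pm c\pmod 9$ is required but none of $1+2$, $1+4$, $2+4$ is congruent to $\pm$ the third element modulo $9$. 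Hence there is no cyclic $(9,3,1)$-design, while every other admissible $v$ admits one, which is the claim.

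The main obstacle is the construction step for the difference problems: arranging the parametric families so that, simultaneously, every nonzero residue is used exactly once and every triple is closed under its sum relation — essentially a careful interval-packing problem with a few stubborn boundary adjustments, together with the handful of small orders that must be verified directly. By comparison, the short-orbit bookkeeping in the case $v\equiv 3\pmod 6$ is routine.
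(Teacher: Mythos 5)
The paper does not actually prove this theorem: it is quoted from Peltesohn's 1939 paper, and the introduction only records that the problem is equivalent to Heffter's two difference problems, which Peltesohn solved. Your proposal follows exactly that classical route, and the reductions you set up are correct: the counting necessity, the observation that for $v=6t+1$ all orbits are full so one needs $t$ base triples whose differences partition $\Z_v\setminus\{0\}$ (Heffter's first problem), and for $v=6t+3$ the forced short orbit of $\{0,2t+1,4t+2\}$ leaving Heffter's second problem on $\{1,\dots,3t+1\}\setminus\{2t+1\}$. Your treatment of $v=9$ is also essentially right, though to make it a genuine nonexistence proof you must add the (easy) orbit count showing that any cyclic $(9,3,1)$-design is forced to consist of the one short orbit plus exactly one full orbit, so that the failure of the single required triple really rules out all cyclic designs and not just those of the shape you chose to construct.

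The genuine gap is that the entire constructive content of the theorem --- the explicit parametric solutions to Heffter's first and second difference problems for every $t$ --- is announced but never produced. The sentence ``I would write down explicit families of triples, case-splitting on $t$ modulo a small integer'' is a description of Peltesohn's proof, not a proof: those families are delicate (Peltesohn's solution splits into several residue classes with boundary adjustments and a handful of sporadic cases, and it is precisely the arrangement you call ``a careful interval-packing problem'' that constitutes the theorem). Without exhibiting the triples and verifying that their differences tile the required sets, sufficiency is unproved for every $v$ except the small cases you say you would check by hand. So the proposal is a correct and well-organized plan that matches the standard approach, but as written it establishes only the necessity direction and the exclusion of $v=9$; the sufficiency half would need either the full explicit constructions or, as the paper itself does, a citation to Peltesohn.
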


The problem of determining the spectrum of cyclic $(v,k,1)$-designs with $k>3$ is far from being settled, even for $k=4$. For small orders, no cyclic $(v,4,1)$-design exists for $v=16,25,28$ \cite{cm}; a cyclic $(12t+1,4,1)$-design exists for any $t\leq 1000$ except for $t=2$ \cite{gsm}; a cyclic $(12t+4,4,1)$-design exists for any $3\leq t\leq 50$ \cite{cw}. It has been conjectured that cyclic $(v,4,1)$-designs exist for all $v\equiv 1,4\pmod{12}$ and $v\geq 37$ \cite{rr}.

When $p\equiv 1\pmod{12}$ is a prime, Bose \cite{Bose} provided a sufficient condition for the existence of a cyclic $(p,4,1)$-design admitting a multiplier of order 3, and the necessary and sufficient condition for this special kind of cyclic $(p,4,1)$-designs was established by Buratti in \cite{Buratti95}. Following Buratti's work in \cite{Buratti95-1}, Chen and Zhu \cite{chen45} showed that a cyclic $(p,4,1)$-design exists for any prime $p\equiv 1\pmod{12}$. When $p\equiv 13\pmod{24}$ is a prime, Check and Colbourn \cite{cc} gave a direct construction for cyclic $(4p^n,4,1)$-designs with any given nonnegative integer $n$. Buratti \cite{Buratti02} presented an explicit construction for cyclic $(4p,4,1)$-designs for any prime $p\equiv 1\pmod{12}$. Actually one can extend any cyclic $(p,k,1)$-design with $p$ a prime to a cyclic $(kp,k,1)$-design (cf. \cite{Buratti97}). On the other hand, by means of recursive constructions (cf. \cite{Buratti98,cc84}) cyclic designs of composite order $v = v_1v_2$ can be obtained. On the whole, it appears that no infinite family of cyclic $(v,4,1)$-design was known such that $v$ can run over a congruent class and $v$ is not a prime. For more information on cyclic $(v,4,1)$-designs, the reader is referred to \cite{AbelBuratti2004,abel,bp2,c}.

As the main result of the paper, we are to prove the following theorem.

\begin{Theorem}\label{thm:main}
There exists a cyclic $(v,4,1)$-design if and only if $v\equiv 1,4\pmod{12}$ and $v\not\in\{16,25,28\}$.
\end{Theorem}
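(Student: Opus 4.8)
The plan is to separate the easy necessity direction from the sufficiency, which carries all the weight. Necessity is immediate: a cyclic $(v,4,1)$-design is in particular a $(v,4,1)$-design, so Hanani's theorem forces $v\equiv 1,4\pmod{12}$, and the non-existence for $v\in\{16,25,28\}$ is the computer result quoted from \cite{cm}. For sufficiency I would first reformulate the target. Since $\gcd(v,4)=1$ when $v\equiv 1\pmod{12}$, every orbit is full (by the lemma cited in the introduction) and a cyclic $(v,4,1)$-design is exactly a $(v,4,1)$ difference family in $\Z_v$, i.e.\ $(v-1)/12$ base $4$-sets whose multiset of differences is $\Z_v\setminus\{0\}$ with each value once. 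When $v\equiv 4\pmod{12}$ a counting argument shows the unique subgroup $H=\{0,v/4,v/2,3v/4\}$ of order $4$ must appear as a short block, so it suffices to adjoin to $H$ a family of full base blocks whose differences cover $\Z_v\setminus H$ exactly once --- a cyclic relative difference family modulo $H$ --- which is the same as an optimal $(v,4,1)$ optical orthogonal code. Thus the whole problem is to build these (relative) difference families.

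The second component is a catalogue of direct constructions for ``atomic'' orders. Primes $p\equiv 1\pmod{12}$ are done by Chen--Zhu \cite{chen45}; orders $4p$ with $p\equiv 1\pmod{12}$ by Buratti \cite{Buratti02}, and $4p^n$ with $p\equiv 13\pmod{24}$ by Check--Colbourn \cite{cc}; and the small-order computations of \cite{gsm,cw,cm} already settle all admissible $v\equiv 1\pmod{12}$ up to roughly $12\,000$ (save $v=25$) and all admissible $v\equiv 4\pmod{12}$ up to several hundred. What remains for the ``atoms'' are the proper prime-power orders $p^a$ ($a\ge 2$) --- where one needs a difference family in $\Z_{p^a}$, \emph{not} in the field $\mathbb{F}_{p^a}$ --- and I would obtain these either by lifting a $(p,4,1)$-family from $\Z_p$ through the tower $\Z_{p^a}\to\Z_{p^{a-1}}\to\cdots\to\Z_p$, or, when that fails (e.g.\ $p\in\{5,7,11\}$, where $p\not\equiv 1\pmod{12}$ but $p^2\equiv 1\pmod{12}$), by a radical/Weil-type construction in $\Z_{p^a}$ using the cyclic multiplicative group $\Z_{p^a}^{*}$, valid for all $p^a$ past a moderate threshold and checked directly below it.

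The heart of the argument is a recursive engine that produces cyclic $(v,4,1)$-designs of \emph{composite} order, since --- as the introduction stresses --- that is exactly what has been missing. The basic move is a product/GDD construction: if $v=uw$ with $\gcd(u,w)=1$, a cyclic $(u,4,1)$-design exists, and $\gcd(w,6)=1$ (so that a $(4,1)$-difference matrix over $\Z_w$ exists), then inflating the $u$-design along the difference matrix yields a cyclic $\{4\}$-GDD of group type $w^u$ on $\Z_{uw}$, and filling its $u$ groups with a cyclic $(w,4,1)$-design --- or, when iteration is needed, with a cyclic design having a hole --- gives a cyclic $(uw,4,1)$-design; there is an analogue carrying the subgroup block when $v\equiv 4\pmod{12}$. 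Unlike for ordinary designs, every ingredient (the GDD, the difference matrix, the fillings) must be $\Z_v$-equivariant so that the single length-$v$ automorphism survives, and it is this constraint that forces restrictions like $\gcd(w,6)=1$ and produces real gaps modulo $4$ and $12$. To close them I would add recursions built from cyclic relative difference families and from cyclic incomplete designs, which allow peeling off a factor even when no proper factor of $v$ is itself an admissible order --- the prototype of the difficulty being $v=5\cdot 7\cdot 11=385$, for which none of $35,55,77$ is admissible and no single GDD-then-fill step works, so a two-level or incomplete construction is needed.

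Assembling these, the usual Wilson-style endgame applies: the recursions cover every admissible $v$ above an explicit bound $v_0$, and everything below $v_0$ --- plus the few orders that slip through (short products, proper prime powers, and the $v\equiv 4\pmod{12}$ orders whose only small prime factors are $2$ and $3$, such as $v=4^k$) --- is covered by the cited computations or by explicit base blocks. One then checks that the union of all cases is precisely $\{v:v\equiv 1,4\pmod{12}\}\setminus\{16,25,28\}$, matching the necessary conditions, which finishes the proof. I expect the main obstacle to be twofold: first, upgrading the GDD / difference-matrix / frame machinery to its cyclic-equivariant form, where the extra divisibility restrictions mean several residue and factorization types have to be handled by separate constructions (the $385$-type orders and the $2$-power orders being the hard instances); and second, organizing the finitely many medium-sized orders and the reductions so that nothing other than $\{16,25,28\}$ is left behind. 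In short, the conceptual work is the cyclic recursive framework, and the heavy lifting is the exhaustive check that the exception list is exactly those three values.
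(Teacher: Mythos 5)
Your necessity argument and your reformulation are fine and match the paper: necessity is Hanani plus the computational exclusions of \cite{cm}, and sufficiency reduces to building a $(v,4,1)$-CDF when $v\equiv 1\pmod{12}$ and a $(v,4,4,1)$-CDF (relative to the order-$4$ subgroup, which sits in the short orbit) when $v\equiv 4\pmod{12}$, exactly as in Lemma \ref{lem:CDF-CD}. But the engine you propose for sufficiency --- a cyclic GDD / difference-matrix / frame recursion fed by prime and prime-power atoms, closed off by a Wilson-style ``everything above $v_0$'' argument --- is where the proof breaks. The decisive step, that the $\Z_v$-equivariant recursions cover every admissible $v$ above an explicit bound, is asserted rather than proved, and it is precisely the step that has resisted all previous attempts: as the introduction of the paper notes, before this work no infinite family of cyclic $(v,4,1)$-designs was known in which $v$ runs over a full congruence class and is not prime. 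Wilson's asymptotic machinery does not survive the requirement that every ingredient be cyclic; the reachable set of composite orders from the known atoms (Chen--Zhu for primes $p\equiv 1\pmod{12}$, Buratti for $4p$, Check--Colbourn for $4p^n$ with $p\equiv 13\pmod{24}$, plus the Colbourn--Colbourn and Buratti recursions) has never been shown to contain all large admissible $v$, and your own list of problem cases ($385$-type orders, proper prime powers of $5,7,11$, powers of $2$) is not exhaustive and is not actually resolved by anything you construct. So the proposal is a research program, not a proof.

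The paper's route is entirely different and entirely direct: it takes the Yang--Lin parametric family of base blocks $\{0,\ \alpha_{r1}t+\alpha_{r2}+i,\ \beta_{r1}t+\beta_{r2}+2i,\ \gamma_{r1}t+\gamma_{r2}+3i\}$, shrinks the index range to $1\leq i\leq t-2$, $i\neq\lfloor t/2\rfloor$, retunes the constants $\alpha_{r2},\beta_{r2},\gamma_{r2}$ separately for each residue of $v$ modulo $72$ (Lemmas \ref{thm:CDF1mod72} and \ref{thm:CDF13mod72}, Theorem \ref{thm:CDF4mod12}), and then adjoins $18+x$ sporadic base blocks found by computer to absorb the leave. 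This produces, for every $v\equiv 1\pmod{12}$ with $v\neq 25$ and every $v\equiv 4\pmod{12}$ with $v\notin\{16,28\}$ beyond a small threshold handled by tables, an explicit $(v,4,1)$-CDF or $(v,4,4,1)$-CDF in one uniform stroke, with no recursion and no asymptotic bound. If you want to salvage your approach you would need to either prove the coverage claim for the cyclic recursions (which is open and likely false without further ingredients) or replace that step with direct constructions of the kind the paper supplies.
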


Cyclic designs are closely related to optical orthogonal codes that are widely used as spreading codes in optical code-division multiple access systems \cite{csw}. As a corollary of Theorem \ref{thm:main}, we obtain the following optimal optical orthogonal codes.

\begin{Theorem}\label{thm:main-OOC}
There exists an optimal $(v,4,1)$-optical orthogonal code for any $v\equiv 1,4\pmod{12}$ and $v\not\in\{16,25,28\}$. 
\end{Theorem}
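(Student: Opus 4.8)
The plan is to obtain Theorem~\ref{thm:main-OOC} as an immediate corollary of Theorem~\ref{thm:main}, via the standard correspondence between cyclic designs and optical orthogonal codes. Recall that a $(v,4,1)$-OOC may be identified with a family $\mathcal{C}$ of $4$-subsets of $\Z_v$ (the supports of the codewords) such that every nonzero element of $\Z_v$ occurs at most once as a difference $x-y$ with $x,y$ in a common member of $\mathcal{C}$; equivalently, $\mathcal{C}$ is the base-block set of a cyclic ``$(v,4,1)$-packing'' all of whose orbits are full. The Johnson bound gives $|\mathcal{C}|\le\lfloor\frac14\lfloor\frac{v-1}{3}\rfloor\rfloor$, and $\mathcal{C}$ is \emph{optimal} when equality holds.

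First I would start from a cyclic $(v,4,1)$-design $(\Z_v,\B)$, whose existence for $v\equiv 1,4\pmod{12}$ with $v\notin\{16,25,28\}$ is guaranteed by Theorem~\ref{thm:main}, and partition $\B$ into $\Z_v$-orbits. When $v\equiv 1\pmod{12}$ we have $\gcd(v,4)=1$, so by \cite[Lemma 1]{k} every orbit is full and there are exactly $\frac1v\cdot\frac{v(v-1)}{12}=\frac{v-1}{12}$ base blocks. When $v\equiv 4\pmod{12}$, a short-orbit analysis shows that any block with nontrivial stabiliser must be a coset of the order-$4$ subgroup $\langle v/4\rangle$ (a stabiliser of order $2$ would cover some pair at distance $v/2$ twice), so there is exactly one short orbit, of length $v/4$, and it accounts precisely for the pairs at distances $v/4$ and $v/2$; a short computation then leaves $\frac1v\big(\frac{v(v-1)}{12}-\frac v4\big)=\frac{v-4}{12}$ full base blocks.

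Next I would take $\mathcal{C}$ to be the collection of those full-orbit base blocks. By construction their difference lists are genuine sets and are pairwise disjoint, so $\mathcal{C}$ is a $(v,4,1)$-OOC. Writing $v=12t+1$ or $v=12t+4$, one checks directly that $\lfloor\frac14\lfloor\frac{v-1}{3}\rfloor\rfloor=t$ in both cases, while $|\mathcal{C}|$ equals $\frac{v-1}{12}=t$ and $\frac{v-4}{12}=t$ respectively; hence $\mathcal{C}$ meets the Johnson bound and is optimal. Combining this with Theorem~\ref{thm:main} then yields an optimal $(v,4,1)$-OOC for every admissible $v$.

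The main obstacle here is essentially nonexistent: all the substantive work lies in Theorem~\ref{thm:main}, and the only point requiring a little care is the bookkeeping for $v\equiv 4\pmod{12}$, where one must verify that deleting the unique short orbit still leaves exactly $t$ full base blocks, so that the Johnson bound is attained rather than merely approached.
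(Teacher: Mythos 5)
Your proof is correct, but it takes a slightly different route from the paper's. The paper obtains the OOCs directly from the difference families underlying Theorem~\ref{thm:main}: by Lemma~\ref{lem:CDP-OOC} a $(v,4,1)$-CDP is equivalent to an OOC, a $(v,4,1)$-CDF (Theorem~\ref{thm:CDF}) is already a CDP with $(v-1)/12$ base blocks, and a $(v,4,4,1)$-CDF (Theorem~\ref{thm:CDF4mod12}) is already a CDP with $(v-4)/12$ base blocks, each of which equals $\lfloor (v-1)/12\rfloor$; so the result is a one-line consequence of the constructions. You instead treat Theorem~\ref{thm:main} as a black box and recover a CDP from an arbitrary cyclic $(v,4,1)$-design by discarding short orbits, which forces you to carry out the stabiliser analysis (ruling out stabilisers of order $2$ via the difference $v/2$, identifying the unique short orbit as the orbit of a coset of $\langle v/4\rangle$, and recounting). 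That analysis is correct, and your bookkeeping $\lfloor\frac14\lfloor\frac{v-1}{3}\rfloor\rfloor=t=\frac{v-1}{12}$ or $\frac{v-4}{12}$ matches the paper's optimality criterion. What your approach buys is independence from how Theorem~\ref{thm:main} was proved; what the paper's buys is brevity, since the objects it constructed are already the optimal CDPs and no orbit analysis is needed.
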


\section{Preliminaries}\label{sec-2}

A useful tool for generating cyclic designs is the concept of cyclic difference families. Every union in this paper will be understood as {\em multiset union}. A {\em $(v,k,1)$-cyclic difference packing} (briefly CDP) is a family $\F$ of $k$-subsets (called {\em base blocks}) of ${\mathbb Z}_{v}$ such that the multiset
$$\Delta \F:=\bigcup_{F\in\F}\Delta F:=\{x-y \pmod{v}:x,y\in F, x\not=y,F\in \F\}$$
contains every element of ${\mathbb Z}_{v}\setminus \{0\}$ at most once. Write $L:={\mathbb Z}_{v}\setminus \Delta \F$, and $L$ is said to be the \emph{difference leave} or {\em leave} of $\F$. If $L=\{0\}$, $\F$ is called a {\em $(v,k,1)$-cyclic difference family}. If $k$ is a divisor of $v$ and $L$ is the subgroup of order $k$ in $\Z_v$, $\F$ is called a {\em $(v,k,k,1)$-cyclic difference family} (briefly CDF). A $(v,k,1)$-CDF contains $(v-1)/k(k-1)$ base blocks, and a $(v,k,k,1)$-CDF contains $(v-k)/k(k-1)$ base blocks.

\begin{Lemma}\label{lem:CDF-CD}{\rm \cite{cm}}
\begin{itemize}
\item [$(1)$] Let $\F$ be a $(v,k,1)$-CDF. Then $\F$ forms the set of base blocks of a cyclic $(v,k,1)$-design.
\item [$(2)$] Let $\F$ be a $(v,k,k,1)$-CDF. Then $\F\cup \{\{0,v/k,2v/k,\ldots,(k-1)v/k\}\}$ forms the set of base blocks of a cyclic $(v,k,1)$-design.
\end{itemize}
\end{Lemma}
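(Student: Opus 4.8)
The plan is to prove both parts by the standard ``development'' (difference-family-to-design) argument; the only real care is in the bookkeeping of orbit lengths, to ensure the developed blocks genuinely form a cyclic design with $\F$ --- respectively, $\F$ together with the one extra block --- as a complete and repetition-free system of orbit representatives.

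First I would isolate the elementary counting fact. For any family $\mathcal C$ of $k$-subsets of $\Z_v$, form the multiset $\mathcal B=\{C+g:C\in\mathcal C,\ g\in\Z_v\}$. For a pair $\{x,y\}$ with $x\neq y$ and a fixed $C\in\mathcal C$, the map $g\mapsto(x-g,\,y-g)$ is a bijection from $\{g\in\Z_v:\{x,y\}\subseteq C+g\}$ onto $\{(a,b)\in C\times C:a\neq b,\ a-b=x-y\}$; summing over $C\in\mathcal C$, the number of blocks of $\mathcal B$ (with multiplicity) containing $\{x,y\}$ equals the multiplicity of $x-y$ in the multiset $\Delta\mathcal C$. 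I would pair this with the stabiliser observation: if $C+g=C$ for some $g\neq 0$, then $C$ is a union of cosets of $\langle g\rangle$, so $\Delta C$ contains every nonzero element of $\langle g\rangle$ with multiplicity at least $|\langle g\rangle|\geq 2$, and hence $\Delta\mathcal C$ is not simple.

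For part (1) take $\mathcal C=\F$, a $(v,k,1)$-CDF, so that $\Delta\F$ meets each element of $\Z_v\setminus\{0\}$ exactly once. The counting fact then shows that every pair lies in exactly one block of $\mathcal B$, so $(\Z_v,\mathcal B)$ is a $(v,k,1)$-design invariant under the $v$-cycle; no two translates can coincide (a repeated block would doubly cover its pairs), the stabiliser observation forces every $\Z_v$-orbit of a base block to have full length $v$, and two base blocks in one orbit would have equal difference lists, contradicting simplicity --- so the $(v-1)/k(k-1)$ members of $\F$ lie in distinct full orbits and are exactly the base blocks. For part (2) take $\mathcal C=\F\cup\{S\}$, where $\F$ is a $(v,k,k,1)$-CDF --- so $\Delta\F$ meets each element of $\Z_v\setminus H$ exactly once and no element of $H$, with $H=\{0,v/k,\dots,(k-1)v/k\}$ the order-$k$ subgroup --- and $S=H$. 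The $\Z_v$-orbit of $S$ is precisely the set of the $v/k$ cosets of $H$ (its stabiliser is $H$; this is a short orbit of length $v/k$), and $\Delta S=H\setminus\{0\}$. Hence a pair whose difference lies outside $H$ is covered exactly once, by a translate of a block of $\F$ and by no translate of $S$, while a pair whose difference lies in $H\setminus\{0\}$ lies in the unique coset of $H$ containing it and in no translate of a block of $\F$; so every pair is covered exactly once. As before the orbits of blocks of $\F$ are full --- a stabiliser element $g\neq0$ would force a coset of $\langle g\rangle$ into the block, repeating differences and, if $\langle g\rangle\leq H$, placing an element of $H$ into $\Delta\F$, both impossible --- so $\F\cup\{S\}$ is the set of base blocks of a cyclic $(v,k,1)$-design, with $S$ generating the one short orbit and the number of orbits being $(v-k)/k(k-1)+1$.

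The single point needing genuine attention --- the nearest thing to an obstacle --- is nailing down this orbit picture: that part (1) produces no short orbit whatsoever, and that in part (2) the adjoined block $S$ contributes exactly one short orbit, of length $v/k$ and no shorter, whose ``new'' differences are precisely $H\setminus\{0\}$, so that the leave of the CDF is matched and the overall block count $v(v-1)/k(k-1)$ comes out right. Everything else is the routine development calculation, and the argument is insensitive to $k$, although only $k=4$ is needed in the sequel.
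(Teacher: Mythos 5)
Your proof is correct: the development/counting bijection, the stabiliser argument ruling out short orbits in part (1), and the careful treatment of the one short orbit generated by the subgroup $H$ in part (2) together give a complete and standard proof of the lemma. The paper itself offers no proof here --- it quotes the result from Colbourn and Mathon \cite{cm} --- and your argument is exactly the classical one that reference supplies, so there is nothing substantive to compare.
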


For any base block $F$ of a $(v,k,1)$-CDP $\F$, if $x,y\in F$ and $x>y$, we call $x-y$ a {\em positive difference} from $F$, and $y-x \pmod{v}$ a {\em negative difference} from $F$. The collection of all positive differences (resp. negative differences) in $\Delta F$ is denoted by $\Delta^+ F$ (resp. $\Delta^- F$). Write $\Delta^+\F=\bigcup_{F\in\F} \Delta^+ F$ and $\Delta^-\F=\bigcup_{F\in\F} \Delta^- F$. Clearly $\Delta \F=$ $\Delta^+ \F\cup \Delta^- \F$.

For positive integers $a,b$ and $c$ such that $a\leq b$ and $a\equiv b\pmod{c}$, we set $[a,b]_c:=\{a+ci:0\leq i\leq (b-a)/c\}$. When $c=1$, $[a,b]_1$ is simply written as $[a,b]$.

\section{Direct constructions for cyclic difference families}

The idea that we use to construct a $(v,4,1)$-CDF is from \cite{yl} which is a monograph on coding theory and cryptography written by Yang and Lin in Chinese in 1992. In Section \ref{sec:revisit} we give a review of Yang and Lin's construction for optical orthogonal codes, which are equivalent to cyclic difference packings. By modifying Yang and Lin's construction slightly, we obtain a $(v,4,1)$-CDF for any $v\equiv 1\pmod{72}$ in Section \ref{sec:slight}. Further modification of Yang and Lin's construction is made in Section \ref{sec:further} to produce $(v,4,1)$-CDFs and $(v,4,4,1)$-CDFs for all admissible values of $v$.

\subsection{Revisit of Yang and Lin's construction}\label{sec:revisit}

A $(v,k,1)$-{\em optical orthogonal code} (briefly OOC) $\cal{C}$, is a family of $(0,1)$ sequences (called {\em codewords}) of length $v$ and weight $k$ satisfying that
for any ${\mathbf{x}}=(x_0,x_1,\ldots,x_{v-1})\in\cal{C}$, ${\mathbf{y}}=(y_0,y_1,\ldots,y_{v-1})\in\cal{C}$ and any integer $r$,
$\sum_{i=0}^{v-1}x_iy_{i+r}\leq 1,$
where either ${\mathbf{x}}\neq {\mathbf{y}}$ or $r\neq 0$, and the arithmetic $i+r$ is reduced modulo $v$. A $(v,k,1)$-OOC with $\lfloor (v-1)/k(k-1)\rfloor$ codewords is said to be {\em optimal}.

\begin{Lemma}\label{lem:CDP-OOC} {\rm \cite[Theorem 2.1]{Yin98}}
A $(v,k,1)$-CDP with $b$ base blocks is equivalent to a $(v,k,1)$-OOC with $b$ codewords.
\end{Lemma}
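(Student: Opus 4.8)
The plan is to exhibit an explicit bijection between the base blocks of a $(v,k,1)$-CDP and the codewords of a $(v,k,1)$-OOC, and then check that the defining conditions of the two objects correspond to each other term by term. Given a $k$-subset $F=\{a_1,\ldots,a_k\}\subseteq\Z_v$, I would associate to it its characteristic vector $\mathbf{x}^F=(x_0,\ldots,x_{v-1})$, where $x_i=1$ if $i\in F$ and $x_i=0$ otherwise; this is a $(0,1)$-sequence of length $v$ and weight $k$, and the map $F\mapsto\mathbf{x}^F$ is clearly a bijection between $k$-subsets of $\Z_v$ and weight-$k$ codewords of length $v$. So a family $\F$ of $b$ base blocks maps to a family $\mathcal{C}$ of $b$ codewords, and vice versa; it remains to translate the combinatorial constraints.

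The key computation is the identity $\sum_{i=0}^{v-1}x^F_i x^G_{i+r}=|F\cap(G-r)|=\#\{(a,b)\in F\times G: a-b\equiv r\pmod v\}$, which counts the multiplicity of $r$ among the differences $a-b$ with $a\in F$, $b\in G$. I would split this into two cases. When $F=G$: for $r=0$ the sum is $|F|=k$, which is irrelevant to the OOC condition (it only constrains the case "$\mathbf{x}\neq\mathbf{y}$ or $r\neq0$"); for $r\neq0$, the sum equals the number of ordered pairs in $F$ with difference $r$, i.e.\ the multiplicity of $r$ in $\Delta F$, so the auto-correlation bound $\sum x^F_i x^F_{i+r}\leq1$ for all $r\neq0$ is exactly the statement that each nonzero element of $\Z_v$ appears at most once in $\Delta F$. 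When $F\neq G$: the sum equals the number of ways to write $r$ as $a-b$ with $a\in F$, $b\in G$; since $a-b$ is a difference arising from the pair $F,G$ while $b-a$ arises symmetrically, having $\sum x^F_i x^G_{i+r}\leq1$ for all $r$ is equivalent to saying no nonzero value is produced more than once across the (multiset) union of difference lists of distinct base blocks. Combining the two cases, the full OOC correlation condition holds for $\mathcal{C}$ if and only if $\Delta\F$ contains every element of $\Z_v\setminus\{0\}$ at most once, which is precisely the CDP condition. Finally, since the base blocks of a CDP are distinct $k$-subsets, the corresponding codewords are distinct, and conversely; so $b$ is preserved by the correspondence.

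I do not anticipate a genuine obstacle here: the statement is essentially a dictionary between two equivalent languages, and the only thing that needs care is bookkeeping the cases $F=G$ versus $F\neq G$ and the role of $r=0$, together with noting that differences $a-b$ and $b-a$ are both accounted for (so that "multiplicity $\leq1$ over all $r$" on the code side matches "each element at most once in $\Delta\F$" on the design side, where $\Delta\F$ already includes both positive and negative differences). A remark I would include for completeness: the same correspondence sends a $(v,k,1)$-CDF to an OOC meeting the bound $\lfloor(v-1)/k(k-1)\rfloor$ with equality (since a CDF uses every nonzero difference exactly once), which is why CDFs yield optimal OOCs and underlies Theorem~\ref{thm:main-OOC}.
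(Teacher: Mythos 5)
The paper does not actually prove this lemma: it is quoted verbatim from Yin \cite[Theorem 2.1]{Yin98}, so there is no internal proof to compare against, and you are supplying the standard argument from scratch. Your dictionary (block $\leftrightarrow$ characteristic vector, correlation sum $\sum_i x^F_i x^G_{i+r}=|F\cap(G-r)|$) is the right one, and the auto-correlation half is handled correctly: for $F=G$ and $r\neq 0$ the sum is exactly the multiplicity of $r$ in $\Delta F$. (Minor sign slip: the sum counts pairs $(a,b)\in F\times G$ with $b-a\equiv r$, not $a-b\equiv r$; this is harmless because the conditions are quantified over all $r$.)

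The cross-correlation step, however, has a genuine gap. For $F\neq G$ the sum counts representations of $r$ as a \emph{mixed} difference $b-a$ with $a\in F$, $b\in G$; these mixed differences are not elements of $\Delta F$ or $\Delta G$, and your one-line justification (``$a-b$ arises from the pair $F,G$ while $b-a$ arises symmetrically'') does not connect a bound on mixed differences to the CDP condition, which is about the multiset $\Delta\F=\bigcup_{F}\Delta F$ of \emph{internal} differences covering each nonzero residue at most once. The missing ingredient is the usual ``difference of differences'' observation: if $b_1-a_1=b_2-a_2=r$ with $(a_1,b_1)\neq(a_2,b_2)$, then $s:=a_1-a_2=b_1-b_2$ is nonzero and lies in both $\Delta F$ and $\Delta G$, so $\Delta\F$ covers $s$ twice; conversely, if some $s\neq 0$ lies in both $\Delta F$ and $\Delta G$, say $s=a_1-a_2=b_1-b_2$, then $r:=b_1-a_1=b_2-a_2$ admits two mixed representations, violating the cross-correlation bound. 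The same observation also disposes of the $r=0$ cross-correlation case, i.e.\ $|F\cap G|\leq 1$. With that one step inserted your proof is complete and is essentially the argument of Yin's cited theorem.
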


Yang and Lin \cite{yl} constructed a $(v,4,1)$-OOC with $\frac{v-1}{12}-2$ codewords for any $v\equiv 1 \pmod{72}$. Actually one of these codewords is not correct, so their $(v,4,1)$-OOC consists of $\frac{v-1}{12}-3$ codewords. Lemma \ref{lem:CDP-OOC} establishes the equivalence between $(v,4,1)$-OOCs and $(v,4,1)$-CDPs. Thus modifying Yang and Lin's construction to obtain $(v,4,1)$-CDFs is worth pursuing. We include Yang and Lin's construction here to facilitate the reader to compare their construction with ours.

\begin{Lemma}{\rm \cite{yl}} \label{lem:Yang}
There exists a $(v,4,1)$-CDP with $\frac{v-1}{12}-3$ base blocks for any integer $v\equiv 1\pmod{72}$ and $v>1$.
\end{Lemma}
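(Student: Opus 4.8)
The plan is to give an explicit construction of a $(v,4,1)$-CDP with exactly $\frac{v-1}{12}-3$ base blocks for every $v \equiv 1 \pmod{72}$, $v > 1$, and then to verify the difference condition. Since $v \equiv 1 \pmod{72}$ we may write $v = 72t+1$ for some positive integer $t$; the target number of base blocks is $\frac{v-1}{12}-3 = 6t-3$. The total number of nonzero differences covered by such a packing is $12(6t-3) = 72t-36 = v-37$, so the leave $L = \Z_v \setminus \Delta\F$ will have size $36$. First I would lay out the base blocks in parametrized families — typically a handful of "generic" infinite families of blocks whose entries are linear (affine) functions of a running index $i$ ranging over arithmetic progressions such as $[a,b]_c$ in the notation fixed in Section~\ref{sec-2}, possibly together with a bounded number of sporadic blocks to handle edge effects. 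This is precisely the shape of Yang and Lin's construction, which we are told to reproduce here, so the block list itself should be read off (with the single erroneous codeword removed) from \cite{yl}.

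Next I would organize the verification of the difference-disjointness condition, which is the heart of the proof. The standard technique is to partition the nonzero residues of $\Z_v$ (or rather the "half-set" of differences up to sign, i.e. $\{1,2,\ldots,(v-1)/2\}$) into intervals or residue classes, and to show that each of the prescribed families of base blocks contributes its positive differences $\Delta^+ F$ into a designated range, with these ranges pairwise disjoint, and that within a single family the differences $a_j(i) - a_\ell(i)$ are distinct as $i$ runs over its index set. Because the block entries are affine in $i$, each such difference is itself affine in $i$, so "distinctness within a family" reduces to checking that the relevant slopes are nonzero modulo $v$ and that the ranges swept out do not wrap around. Keeping careful track of which differences are "positive" versus "negative" (in the sense of $\Delta^+$ vs.\ $\Delta^-$ defined in the excerpt) is the bookkeeping that makes this manageable: one arranges that all the designed differences land in a canonical fundamental domain.

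The main obstacle I anticipate is exactly this bookkeeping: with $O(t)$ base blocks spread over several families, one must certify that $\Delta\F$ contains each residue at most once, and the intervals produced by different families abut each other in a way that leaves precisely the $36$-element leave uncovered. Since the count of covered differences is forced ($v-37$), any arithmetic slip in the interval endpoints shows up as either an overlap (violating the packing property) or an undercount; indeed the whole point of the present lemma is that Yang and Lin's original list had one block that caused such a defect, so the verification must be done with care. A secondary subtlety is the small-$t$ boundary: the generic families are only valid once their index ranges $[a,b]_c$ are nonempty and non-degenerate, so a short finite check (or an explicit listing) is needed for the smallest admissible $v$, here $v=73$.

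Concretely, then, the steps are: (i) set $v = 72t+1$ and write down the families of base blocks together with their index ranges; (ii) compute $\Delta^+ F$ for each family as an explicit set of affine expressions in the index; (iii) show each family's positive differences occupy a dedicated sub-interval of $[1,(v-1)/2]$ and are internally distinct; (iv) check that these sub-intervals are pairwise disjoint, identify the $36$ leftover residues as the leave, and conclude $\Delta\F$ is multiplicity-free; (v) dispose of the base case $v=73$ by direct inspection. Step (iv) — the global disjointness and the exact accounting of the leave — is where essentially all the work lies.
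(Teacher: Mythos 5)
Your plan matches the paper's proof: the paper writes $v=72t+1$, lists the $6t-3$ base blocks of Yang and Lin in six families whose entries are affine in the running index $i$, tabulates the positive differences of each family as pairwise disjoint intervals and arithmetic progressions (recording the corresponding negative difference whenever a positive one exceeds $36t$), and reads off the explicit $37$-element leave $L$ (your count of $36$ omits only the element $0$, which the paper's definition of leave includes). The only cosmetic differences are that the paper needs no sporadic blocks and no separate base case at $v=73$, since the index sets degenerate gracefully at $t=1$.
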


\begin{proof} Let $v=72t+1$ and $t>0$. The $6t-3$ base blocks are listed below:
\begin{center}
\begin{tabular}{lllll}
$F_{1,i}=\{0$, & $43t+i$,   & $31t+1+2i$,   & $8t+2+3i\}$, & $i\in I_1$; \\
$F_{2,i}=\{0$, & $23t+i$,   & $5t+1+2i$,    & $8t+1+3i\}$, & $i\in I_2$; \\
$F_{3,i}=\{0$, & $41t+i$,   & $25t+2i$,     & $8t+3i\}$, &  $i\in I_3$; \\
$F_{4,i}=\{0$, & $35t+i$,   & $5t+2i$,      & $1+3i\}$, &  $i\in I_4$; \\
$F_{5,i}=\{0$, & $47t+2+i$, & $19t+1+2i$,   & $2+3i\}$, &  $i\in I_5$; \\
$F_{6,i}=\{0$, & $21t+i$,   & $13t+2i$,     & $3i\}$, &  $i\in I_6$, \\
\end{tabular}
\end{center}
where $I_1=I_3=I_6=\{i:1\leq i\leq t-1\}$ and $I_2=I_4=I_5=\{i:0\leq i\leq t-1\}$. \newpage

There is no detailed explanation in \cite{yl} on how Yang and Lin found the above base blocks, and the verification of the correctness of these base blocks is left to the reader. In order to obtain some intuition on the choices of these base blocks, we provide details here to check their construction.
Let ${\cal F}_{r}=\{F_{r,i}:i\in I_r\}$ for $1\leq r\leq 6$. All positive differences from ${\cal F}_r$ are listed in Table \ref{tab0}, and if a positive difference is greater than $36t$, we list its corresponding negative difference. Let
\begin{align*}
L= &  \{0\}\cup \pm\{7t,8t,12t,15t,19t,27t\}
\\ & \ \ \ \ \cup \pm\{8t+2,12t-1,13t,16t,21t,23t-1,25t,27t+1,30t+1,31t+1,35t-2,36t\}.
\end{align*}
It is readily checked that $\Delta(\bigcup_{r=1}^6 \F_r)$ covers every element in ${\mathbb Z}_{v}\setminus L$ exactly once.
\end{proof}

\begin{table}[t]
\caption{Differences from base blocks in Lemma \ref{lem:Yang}} \label{tab0}
\begin{center}
\begin{tabular}{|c|c|c||c|c|}\hline
$\Delta F^+_{1,i}$ & $\Delta{\cal F}^+_1$ & $\Delta{\cal F}^-_1$ & $\Delta F^+_{2,i}$ & $\Delta{\cal F}^+_2$     \\\hline
$43t+i$ & $[43t+1,44t-1]$ & $[28t+2,29t]$ & $23t+i$ & $[23t,24t-1]$     \\
$31t+1+2i$ & $[31t+3,33t-1]_{2}$ & & $5t+1+2i$ & $[5t+1,7t-1]_{2}$  \\
$8t+2+3i$ & $[8t+5,11t-1]_{3}$ & & $8t+1+3i$ & $[8t+1,11t-2]_{3}$  \\
$12t-1-i$ & $[11t,12t-2]$ & & $18t-1-i$ & $[17t,18t-1]$  \\
$35t-2-2i$ & $[33t,35t-4]_{2}$ & & $15t-1-2i$ & $[13t+1,15t-1]_{2}$  \\
$23t-1-i$ & $[22t,23t-2]$ & & $3t+i$ & $[3t,4t-1]$  \\\hline
$\Delta F^+_{3,i}$ & $\Delta{\cal F}^+_3$ & $\Delta{\cal F}^-_3$ & $\Delta F^+_{4,i}$ & $\Delta{\cal F}^+_4$     \\\hline
$41t+i$ & $[41t+1,42t-1]$ & $[30t+2,31t]$ & $35t+i$ & $[35t,36t-1]$     \\
$25t+2i$ & $[25t+2,27t-2]_{2}$ & & $5t+2i$ & $[5t,7t-2]_{2}$  \\
$8t+3i$ & $[8t+3,11t-3]_{3}$ & & $1+3i$ & $[1,3t-2]_{3}$  \\
$16t-i$ & $[15t+1,16t-1]$ & & $30t-i$ & $[29t+1,30t]$  \\
$33t-2i$ & $[31t+2,33t-2]_{2}$ & & $35t-1-2i$ & $[33t+1,35t-1]_{2}$  \\
$17t-i$ & $[16t+1,17t-1]$ & & $5t-1-i$ & $[4t,5t-1]$  \\\hline
$\Delta F^+_{5,i}$ & $\Delta{\cal F}^+_5$ & $\Delta{\cal F}^-_5$ & $\Delta F^+_{6,i}$ & $\Delta{\cal F}^+_6$     \\\hline
$47t+2+i$ & $[47t+2,48t+1]$ & $[24t,25t-1]$ & $21t+i$ & $[21t+1,22t-1]$     \\
$19t+1+2i$ & $[19t+1,21t-1]_{2}$ & & $13t+2i$ & $[13t+2,15t-2]_{2}$  \\
$2+3i$ & $[2,3t-1]_{3}$ & & $3i$ & $[3,3t-3]_{3}$  \\
$28t+1-i$ & $[27t+2,28t+1]$ & & $8t-i$ & $[7t+1,8t-1]$  \\
$47t-2i$ & $[45t+2,47t]_{2}$ & $[25t+1,27t-1]_2$ & $21t-2i$ & $[19t+2,21t-2]_{2}$  \\
$19t-1-i$ & $[18t,19t-1]$ & & $13t-i$ & $[12t+1,13t-1]$  \\\hline
\end{tabular}
\end{center}
\end{table}

\subsection{Slight modification of Yang and Lin's construction}\label{sec:slight}

In Yang and Lin's construction, all base blocks are divided into 6 parts. Each base block in the $r$-th part, $1\leq r\leq 6$, is of the form
\begin{align}\label{eqn:1}
\{0,\ \ \alpha_{r1}t+\alpha_{r2}+i,\ \ \beta_{r1}t+\beta_{r2}+2i,\ \ \gamma_{r1}t+\gamma_{r2}+3i\},
\end{align}
where $i$ runs over some set $I_r$. By choosing appropriate parameters $\alpha_{r1},\alpha_{r2},\beta_{r1},\beta_{r2},\gamma_{r1},\gamma_{r2}$ and $I_r$, Yang and Lin constructed $(v,4,1)$-CDPs shown in Lemma \ref{lem:Yang}.

Observe the difference leave $L$ of the $(v,4,1)$-CDP in the proof of Lemma \ref{lem:Yang}. Each difference in $L$ is almost a multiple of $t$. It is easy to see that we can add one more base block, $\{0,7t,19t,64t+1\}$, to form a $(v,4,1)$-CDP with $\frac{v-1}{12}-2$ base blocks. However, we cannot extend it anymore. A natural idea to solve this problem is to reduce the range of values for $I_r$ such that more differences are released and then reassemble them to produce a CDF. In our construction (see Lemma \ref{thm:CDF1mod72}), we set $I_r=\{i:1\leq i\leq t-2\}\setminus\{\lfloor t/2\rfloor\}$ for each $1\leq r\leq 6$. Note that the lack of $\lfloor t/2\rfloor$ in our $I_r$ ensures that each difference in $L$ is around some multiple of $\lfloor t/2\rfloor$. This increases the flexibility to complete a CDP to a CDF.

\begin{Lemma}\label{thm:CDF1mod72}
There exists a $(v,4,1)$-CDF for any positive integer $v\equiv 1\pmod{72}$.
\end{Lemma}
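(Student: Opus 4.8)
The plan is to explicitly construct a $(v,4,1)$-CDF for $v=72t+1$ by modifying the Yang--Lin families of Lemma~\ref{lem:Yang}. Following the discussion preceding the statement, I would take the six families in the form \eqref{eqn:1} but with the restricted index set $I_r=\{i:1\le i\le t-2\}\setminus\{\lfloor t/2\rfloor\}$ for each $1\le r\le 6$. Write $s=\lfloor t/2\rfloor$. Shrinking each $I_r$ from roughly $t$ to roughly $t-3$ values removes, for each of the six ``slopes'' in each family, three short arithmetic progressions of differences (the extremal indices $i=t-1$, and the missing $i=s$, together with their partners coming from differences between the non-zero entries), so the new difference leave $L'$ becomes $L$ enlarged by six clusters of differences, each cluster being a handful of residues clustered around a small multiple of $t$ and around the corresponding small multiple of $s$. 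I would tabulate $L'$ precisely (this is the analogue of Table~\ref{tab0}, redone with the new $I_r$), and then verify that $L'\setminus\{0\}$ can itself be partitioned into $\Delta$-images of a bounded number of additional base blocks. Concretely, I expect to need on the order of $|L'|/12$ extra base blocks, chosen by hand, such as generalizations of $\{0,7t,19t,64t+1\}$ together with a small constant number of ``cleanup'' blocks involving the entries near multiples of $s$; the total count must come out to exactly $(v-1)/12=6t$ base blocks, which is the arithmetic check that the packing is in fact a CDF.

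The key steps, in order: (i) fix $v=72t+1$ and the six restricted families, and list all positive differences, replacing any difference exceeding $36t$ by its negative, exactly as in Table~\ref{tab0}; (ii) compute the resulting leave $L'=\Z_v\setminus\Delta(\bigcup_{r=1}^6\F_r)$ explicitly as a union of short intervals and arithmetic progressions parametrized by $t$; (iii) exhibit an explicit set $\F_0$ of additional base blocks whose differences exactly tile $L'\setminus\{0\}$, so that $\F_0\cup\bigcup_{r=1}^6\F_r$ is a $(v,4,1)$-CDF; (iv) confirm the block count equals $6t$ and check the small cases of $t$ (say $t\le$ a few) separately by direct listing, since the generic description of $I_r$, $s$, and $\F_0$ may degenerate for the smallest $t$. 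Steps (i), (ii) and (iv) are routine though tedious; the verification in (ii) that no difference is repeated is a finite check of overlaps among the listed intervals, made uniform in $t$ by the usual device of writing every endpoint as a fixed linear function of $t$ (and of $s$, treating $t$ even and odd separately, since $s=\lfloor t/2\rfloor$).

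The main obstacle is step (iii): finding the auxiliary blocks $\F_0$ so that $\Delta\F_0$ is \emph{exactly} the leave $L'\setminus\{0\}$, with no difference used twice and none omitted. Because $L'$ consists of residues sitting near several different multiples of $t$ and near multiples of $s=\lfloor t/2\rfloor$, the three entries $\{0,a,b\}$ (or $\{0,a,b,c\}$) of each auxiliary block must be placed so that all of $a$, $b$, $b-a$ (and $c$, $c-a$, $c-b$) land in prescribed clusters of $L'$; since $s$ is roughly $t/2$, a difference of two near-multiples of $t$ can itself be a near-multiple of $s$, and it is precisely this coincidence that the choice of $I_r$ was engineered to exploit. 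I would organize this by grouping the clusters of $L'$ into pairs/triples whose ``centers'' are consistent (one center equal to the sum or difference of the other two, up to the small additive constants), and then solving the resulting small linear systems for the exact offsets; the parity of $t$ will force two parallel sub-cases, and one or two sporadic small values of $t$ will need bespoke constructions. Once $\F_0$ is pinned down and the count verified, Lemma~\ref{lem:CDF-CD}(1) upgrades the CDF to the desired cyclic $(v,4,1)$-design, completing the proof.
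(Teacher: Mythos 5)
Your plan coincides with the paper's strategy: restrict each index set to $I_r=\{i:1\leq i\leq t-2\}\setminus\{\lfloor t/2\rfloor\}$, recompute the leave, and then adjoin explicit auxiliary base blocks whose differences tile the leave exactly, treating $t$ odd and $t$ even separately and disposing of the smallest cases ($v=73,145$) by other means. However, as written your argument has a genuine gap at exactly the point you flag as "the main obstacle": step (iii) is never carried out, and nothing in your proposal guarantees that a suitable family $\F_0$ exists. The leave of the truncated packing has size exactly $216$ (you can pin this down: $6(t-3)$ blocks account for $72t-216$ nonzero differences out of $72t$), so precisely $18$ auxiliary $4$-subsets are required — not "on the order of $|L'|/12$", and not blocks of the form $\{0,a,b\}$, which have only three points. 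The entire mathematical content of the lemma is the exhibition of these $18$ blocks with entries linear in $t$ (e.g.\ $\{0,1,\frac{3t-1}{2},11t-2\}$, $\{0,7t+1,28t+1,42t\}$, \dots for $t$ odd, and a different list for $t$ even), together with the verification, uniform in $t>2$, that their $216$ differences are pairwise distinct and exactly fill the leave. Your heuristic of "grouping clusters whose centers are consistent and solving small linear systems" is a reasonable description of how one might search for them, but it is not a proof that the search succeeds; indeed the authors report that the analogous searches in the companion lemmas took hours to days of hand and computer work per congruence class.

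Two smaller points. First, the paper does not handle the small cases by "direct listing" of its own: for $v\in\{73,145\}$ it cites known $(v,4,1)$-CDFs from the literature, and its generic construction is stated for $t>2$ (note that for $t=3$ the six truncated families are empty and the $18$ sporadic blocks constitute the whole CDF, so this boundary case must be covered by the same verification). Second, your step (iv) "confirm the block count equals $6t$" is necessary but far from sufficient — a correct count does not preclude repeated differences — so the uniform-in-$t$ overlap check of step (ii) must also be performed on $\Delta\F_0$ and on its interaction with $\Delta(\bigcup_{r=1}^{6}\F_r)$, which is where all the work lies.
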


\begin{proof} For $v\in\{73,145\}$, a $(v,4,1)$-CDF exists by \cite[Theorem 16.28]{abel}. For $v\equiv 1\pmod{72}$ and $v>145$, let $v=72t+1$ where $t>2$. A $(v,4,1)$-CDF, $\F$, contains $6t$ base blocks. The first $6t-18$ base blocks are listed below:
\begin{center}
\begin{tabular}{llll}
$F_{1,i}:=\{0$, & $43t+i$, & $31t+1+2i$, & $8t+2+3i\}$, \\
$F_{2,i}:=\{0$, & $23t+i$, & $5t+1+2i$, & $8t+1+3i\}$, \\
$F_{3,i}:=\{0$, & $41t+i$, & $25t+2i$, & $8t+3i\}$, \\
$F_{4,i}:=\{0$, & $35t+i$, & $5t+2i$, & $1+3i\}$, \\
$F_{5,i}:=\{0$, & $47t+2+i$, & $19t+1+2i$, & $2+3i\}$, \\
$F_{6,i}:=\{0$, & $21t+i$, & $13t+2i$, & $3i\}$, \\
\end{tabular}
\end{center}
where $1\leq i\leq t-2$ and $i\neq \lfloor t/2\rfloor$. The remaining 18 base blocks are given according to the parity of $t$. If $t$ is odd, we take
\begin{center}
\begin{tabular}{lll}
$\{0,1,\frac{3t-1}{2},11t-2\}$,&$\{0,2,3t-1,15t\}$,&
$\{0,3t-2,11t-1,35t-2\}$,\\$\{0,4t-1,12t-1,27t-2\}$,&
$\{0,\frac{3t+1}{2},\frac{57t+3}{2},\frac{71t-1}{2}\}$,&$\{0,5t-1,13t+1,34t\}$,\\
$\{0,7t+1,28t+1,42t\}$,&$\{0,17t,36t+2,\frac{95t+3}{2}\}$,&
$\{0,\frac{37t-1}{2},\frac{43t-1}{2},\frac{89t-1}{2}\}$,\\$\{0,7t-1,18t-1,42t-1\}$,&$\{0,4t,\frac{33t+1}{2},40t\}$,&

$\{0,6t-1,33t+1,45t+1\}$,\\
$\{0,15t+1,31t+2,56t+1\}$,&$\{0,7t,25t,53t+2\}$,&
$\{0,\frac{15t+1}{2},\frac{59t+1}{2},\frac{99t+3}{2}\}$,\\
$\{0,\frac{9t-1}{2},\frac{19t-1}{2},\frac{109t+3}{2}\}$,&
$\{0,6t,\frac{31t+1}{2},19t\}$,&$\{0,5t+1,25t+1,58t+1\}$.
\end{tabular}
\end{center}
If $t$ is even, we take
\begin{center}
\begin{tabular}{lll}
$\{0,1,3t-2,11t-2\}$,&$\{0,3t-1,15t-2,60t+1\}$,&
$\{0,\frac{3t}{2}+2,6t+1,19t+1\}$,\\$\{0,2,35t,42t+1\}$,&
$\{0,8t+2,38t+3,\frac{99t}{2}+2\}$,&$\{0,\frac{19t}{2}+2,\frac{57t}{2}+1,\frac{107t}{2}+2\}$,\\
$\{0,\frac{19t}{2},\frac{33t}{2},\frac{95t}{2}+2\}$,&$\{0,20t,39t+2,46t\}$,&
$\{0,\frac{59t}{2},31t+1,45t\}$,\\$\{0,\frac{3t}{2},25t,64t\}$,&
$\{0,15t+1,20t+1,\frac{55t}{2}+1\}$,&$\{0,\frac{7t}{2},40t+1,54t+1\}$,\\
$\{0,4t,15t,51t+2\}$,&$\{0,5t-1,22t-1,50t+1\}$,&
$\{0,6t,24t-1,51t+1\}$,\\$\{0,3t,16t+1,39t+1\}$,&
$\{0,12t+1,16t,65t+2\}$,&$\{0,5t+1,29t+1,40t\}$.\\
\end{tabular}
\end{center}
One can make a table similar to Table \ref{tab0} for the first $6t-18$ base blocks and then check that $\Delta \F=\Z_v\setminus\{0\}$. Thus $\F$ is a $(v,4,1)$-CDF. \end{proof}

To facilitate the reader to check the correctness of our results, we provide a computer code written by GAP \cite{GAP4} to show that our constructions in Lemma \ref{thm:CDF1mod72}, Lemma \ref{thm:CDF13mod72} and Theorem \ref{thm:CDF4mod12} always work regardless of the parameter $t$. The interested reader can get a copy of the computer code from \cite{zfw}.

\subsection{Further modification of Yang and Lin's construction}\label{sec:further}

To construct $(v,4,1)$-CDFs with $v\not\equiv 1\pmod{72}$, we need to modify the values of $\alpha_{r2}$, $\beta_{r2}$ and $\gamma_{r2}$ in \eqref{eqn:1}.

\begin{Lemma}\label{thm:CDF13mod72}
There exists a $(v,4,1)$-CDF for any positive integer $v\equiv 13,25,37,49,61\pmod{72}$ and $v\neq 25$.
\end{Lemma}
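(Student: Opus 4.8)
The plan is to mimic the proof of Lemma \ref{thm:CDF1mod72} as closely as possible, adapting the six families of base blocks in \eqref{eqn:1} to the new residue classes. First I would write $v=72t+s$ with $s\in\{13,25,37,49,61\}$ and note that a $(v,4,1)$-CDF contains $(v-1)/12$ base blocks. The residues $s$ are exactly those congruent to $1\pmod{12}$ among $1,\dots,71$ other than $s=1$, which is why these five cases are bundled together. For each fixed $s$ I would keep the six ``generic'' families $F_{r,i}$ of Lemma \ref{thm:CDF1mod72} essentially unchanged in their leading $t$-coefficients $\alpha_{r1},\beta_{r1},\gamma_{r1}$, but shift the constants $\alpha_{r2},\beta_{r2},\gamma_{r2}$ (which now may depend on $s$) so that the positive differences from $\bigcup_{r=1}^6\F_r$ again tile all but a bounded ``leave'' $L$ of $\Z_v$. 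Because the number of generic blocks changes by only a bounded amount as $s$ varies, the leftover set $L$ has size $O(1)$, independent of $t$, and consists of elements that are each close to a small rational multiple of $t$ (or of $\lfloor t/2\rfloor$, if the same parity trick is needed).

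Next I would handle the bounded leave. For each of the five values of $s$, and for each parity of $t$ when that is necessary, I would exhibit an explicit finite list of ``sporadic'' base blocks — written with entries that are affine functions of $t$ (e.g.\ $\{0,1,3t-2,11t-2\}$ and the like, or their half-integer analogues) — whose differences exactly account for $L$. The verification that $\Delta\F=\Z_v\setminus\{0\}$ then reduces, as in the previous lemma, to building a table of differences for the six generic families and a finite check that the sporadic blocks supply precisely the missing residues; this finite check is uniform in $t$ once $t$ exceeds some small threshold, and the finitely many small exceptions (including, presumably, degenerate overlaps when $t$ is tiny, as well as $v=25$) are disposed of by citing \cite[Theorem 16.28]{abel} or a direct search. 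The GAP code referenced from \cite{zfw} would be invoked to certify that the construction works for all $t$ in each case.

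The main obstacle, as in Lemma \ref{thm:CDF1mod72}, is the bookkeeping: finding, for each residue $s$ and each parity, constants that make the six generic families' positive differences pack into a union of intervals (with the right arithmetic progressions of step $2$ and $3$) whose complement $L$ is (i) small, (ii) parity-controlled, and (iii) realizable by a short list of affine-in-$t$ base blocks with no internal collisions and no collisions against the generic differences. This is a finite but delicate search carried out once per case; the half-integer entries appear precisely to keep $L$ near multiples of $\lfloor t/2\rfloor$ so that the sporadic blocks can absorb it. I expect no new conceptual difficulty beyond Lemma \ref{thm:CDF1mod72} — only five times as much verification — and I would relegate the difference tables and the case analysis to the same style of presentation used there, leaning on \cite{zfw} for the uniform-in-$t$ confirmation. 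The genuine exception $v=25$ is forced (no cyclic $(25,4,1)$-design exists by \cite{cm}) and simply excluded from the statement.
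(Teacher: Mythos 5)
Your proposal follows essentially the same route as the paper: keep the six generic families of Lemma \ref{thm:CDF1mod72} with unchanged leading $t$-coefficients, retune the additive constants for each residue class $v\equiv 12x+1\pmod{72}$ ($1\leq x\leq 5$), restrict the index set (omitting $i=\lfloor t/2\rfloor$) so the bounded leave sits near multiples of $\lfloor t/2\rfloor$, absorb that leave with an explicit list of $18+x$ affine-in-$t$ sporadic blocks split by the parity of $t$, handle small $v$ by citing \cite[Theorem 16.28]{abel}, and certify uniformity in $t$ via the GAP code of \cite{zfw}. This matches the paper's proof in both strategy and all essential details.
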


\begin{proof} For $v\equiv 13,25,37,49,61\pmod{72}$, $v\neq 25$ and $v\leq 205$, a $(v,4,1)$-CDF exists by \cite[Theorem 16.28]{abel}. For $v>205$, let $v=72t+12x+1$ where $t>2$ and $1\leq x\leq 5$. A $(v,4,1)$-CDF contains $6t+x$ base blocks. The first $6t-18$ base blocks are listed below:
\begin{center}
\begin{tabular}{llll}
$\{0$, & $43t+a_1+i$, & $31t+a_2+2i$, & $8t+a_3+3i\}$,\\
$\{0$, & $23t+b_1+i$, & $5t+b_2+2i$, & $8t+b_3+3i\}$,\\
$\{0$, & $41t+c_1+i$, & $25t+c_2+2i$, & $8t+c_3+3i\}$,\\
$\{0$, & $35t+d_1+i$, & $5t+d_2+2i$, & $d_3+3i\}$,\\
$\{0$, & $47t+e_1+i$, & $19t+e_2+2i$, & $e_3+3i\}$,\\
$\{0$, & $21t+f_1+i$, & $13t+f_2+2i$, & $f_3+3i\}$,\\
\end{tabular}
\end{center}
where $1\leq i\leq t-2$, $i\neq \lfloor t/2\rfloor$, and $a_j,b_j,c_j,d_j,e_j,f_j$ for $1\leq j\leq 3$ are given in the following table:
\begin{center}
\begin{tabular}{|l|lll|lll|lll|lll|lll|lll|}\hline
$x$ & $a_1$ & $a_2$ & $a_3$ & $b_1$ & $b_2$ & $b_3$ & $c_1$ & $c_2$ & $c_3$ & $d_1$ & $d_2$ & $d_3$ & $e_1$ & $e_2$ & $e_3$ & $f_1$ & $f_2$ & $f_3$\\\hline
$1$ & $8$ & $7$ & $5$ & $4$ & $3$ & $4$ & $9$ & $6$ & $3$ & $5$ & $2$ & $1$ & $10$ & $3$ & $2$ & $2$ & $1$ & $0$ \\
$2$ & $16$ & $14$ & $8$ & $10$ & $4$ & $7$ & $15$ & $11$ & $6$ & $12$ & $3$ & $1$ & $17$ & $9$ & $2$ & $6$ & $4$ & $0$ \\
$3$ & $25$ & $20$ & $10$ & $14$ & $5$ & $8$ & $24$ & $18$ & $9$ & $17$ & $4$ & $1$ & $24$ & $11$ & $2$ & $12$ & $7$ & $0$ \\
$4$ & $31$ & $24$ & $10$ & $18$ & $7$ & $8$ & $30$ & $20$ & $9$ & $23$ & $6$ & $1$ & $32$ & $13$ & $2$ & $16$ & $11$ & $0$ \\
$5$ & $38$ & $30$ & $11$ & $21$ & $8$ & $9$ & $37$ & $24$ & $10$ & $29$ & $7$ & $1$ & $40$ & $17$ & $2$ & $18$ & $13$ & $0$ \\\hline
 \end{tabular} .
\end{center}
The remaining $18+x$ base blocks are provided in Table \ref{tab1} according to the parity of $t$. \end{proof}

In the proof of Lemma \ref{thm:CDF13mod72}, the first $6t-18$ base blocks were found by hand, and the latter $18+x$ base blocks were found by computer search. We illustrate why and how to modify the values of $\alpha_{r2}$, $\beta_{r2}$ and $\gamma_{r2}$ in \eqref{eqn:1} to get the $6t-18$ base blocks in Lemma \ref{thm:CDF13mod72}. For instance, when $v\equiv 13\pmod{72}$, let $v=72t+13$. Consider Yang and Lin's base blocks in Lemma \ref{lem:Yang}, which can be rewritten as follows:
\begin{center}
\begin{tabular}{lllll}
$F_{1,i}=\{0$, & $43t+a_1+i$, & $31t+a_2+2i$, & $8t+a_3+3i\}$, & $i\in I_1$; \\
$F_{2,i}=\{0$, & $23t+b_1+i$, & $5t+b_2+2i$, & $8t+b_3+3i\}$, & $i\in I_2$; \\
$F_{3,i}=\{0$, & $41t+c_1+i$, & $25t+c_2+2i$, & $8t+c_3+3i\}$, &  $i\in I_3$; \\
$F_{4,i}=\{0$, & $35t+d_1+i$, & $5t+d_2+2i$, & $d_3+3i\}$, &  $i\in I_4$; \\
$F_{5,i}=\{0$, & $47t+e_1+i$, & $19t+e_2+2i$, & $e_3+3i\}$, &  $i\in I_5$; \\
$F_{6,i}=\{0$, & $21t+f_1+i$, & $13t+f_2+2i$, & $f_3+3i\}$, &  $i\in I_6$, \\
\end{tabular}
\end{center}
where $I_1=I_3=I_6=\{i:1\leq i\leq t-1\}$, $I_2=I_4=I_5=\{i:0\leq i\leq t-1\}$, $(a_1,a_2,a_3)=(0,1,2)$, $(b_1,b_2,b_3)=(0,1,1)$, $(c_1,c_2,c_3)=(0,0,0)$, $(d_1,d_2,d_3)=(0,0,1)$, $(e_1,e_2,e_3)=(2,1,2)$ and $(f_1,f_2,f_3)=(0,0,0)$.

Let's analyze why some of Yang and Lin's base blocks are not valid for $v\equiv 13\pmod{72}$. By Table \ref{tab0}, $[43t+1,44t-1]\in \Delta{\cal F}^+_1$. Then $v=72t+13$ implies that $\Delta{\cal F}^-_1$ contains the set $[28t+14,29t+12]$, which interacts with the set $[29t+1,30t]\in \Delta{\cal F}^+_4$ on $12$ elements. With the same argument, $[41t+1,42t-1]\in \Delta{\cal F}^+_3$ leads to $[30t+14,31t+12]\in \Delta{\cal F}^-_3$, which intersects with both the sets $[31t+3,33t-1]_2\in \Delta{\cal F}^+_1$ and $[31t+2,33t-2]_{2}\in \Delta{\cal F}^+_3$. Also, the sets $[47t+2,48t+1]$ and $[45t+2,47t]_{2}$ from $\Delta{\cal F}^+_5$ lead to $[24t+12,25t+11], [25t+13,27t+11]_{2}\in\Delta{\cal F}^-_5$, which intersect with $[25t+2,27t-2]_{2}\in \Delta{\cal F}^+_3$ and $[27t+2,28t+1]\in \Delta{\cal F}^+_5$ respectively.

On one hand, by the above observation, we could delete all the base blocks that produce overlapped differences and then reassemble them to form a $(v,4,1)$-CDF with $v\equiv 13\pmod{72}$. However, this would lead to a big leave that contains too many differences to extend the resulting CDP to a CDF by computer search. On the other hand, we can try to make slight adjustment for the values of $a_1, a_2, a_3,\ldots, f_1, f_2, f_3$ such that the intervals of differences are mutually disjoint and the leave of the CDP is as small as possible. We did this procedure by hand and it often spent us several hours for each $v$ modulo $72$. Once a $(v,4,1)$-CDF with $v\equiv 13\pmod{72}$ was found, we took its first $6t-18$ base blocks, and then did the same strategy for $v\equiv 25\pmod{72}$ to get new $6t-18$ base blocks, and so on. For each case of $v$, the time used to search for the remaining $18+x$ base blocks by a common personal computer was ranged from a half day to two days.

\begin{table}[htbp] \renewcommand\arraystretch{1.09}
\caption{The remaining $18+x$ base blocks in Lemma \ref{thm:CDF13mod72}}\label{tab1}
\begin{center}\tabcolsep 0.03in
\begin{tabular}{|lll|}\hline
$x=1$ and $t$ is odd &&\\\hline
$\{0,2,35t+5,41t+7\}$&$\{0,\frac{3t-3}{2},14t,56t+10\}$&
$\{0,\frac{3t-1}{2},13t+1,25t+3\}$\\
$\{0,\frac{3t+1}{2},5t+1,23t+2\}$&
$\{0,3t-3,3t-2,35t+4\}$&$\{0,3t,27t+4,68t+13\}$\\
$\{0,4t+2,27t+6,68t+12\}$&$\{0,5t+2,20t+2,\frac{135t+23}{2}\}$&
$\{0,6t+1,25t+5,52t+10\}$\\
$\{0,\frac{19t+3}{2},26t+5,45t+6\}$&
$\{0,11t,22t+2,\frac{129t+23}{2}\}$&$\{0,12t+1,19t+2,27t+3\}$\\
$\{0,15t-1,23t+3,65t+11\}$&$\{0,15t+1,34t+4,42t+9\}$&
$\{0,\frac{31t+7}{2},25t+6,59t+11\}$ \\
$\{0,18t+2,21t+1,57t+10\}$&
$\{0,21t+2,24t+3,57t+9\}$&$\{0,\frac{45t+5}{2},\frac{61t+9}{2},\frac{89t+11}{2}\}$\\
$\{0,25t+4,36t+5,41t+8\}$&&\\\hline
$x=1$ and $t$ is even &&\\\hline
$\{0,3t-2,15t,40t+6\}$&$\{0,\frac{3t}{2}+2,19t+3,26t+5\}$&
$\{0,3t-3,40t+7,\frac{125t}{2}+9\}$\\$\{0,\frac{3t}{2},23t+2,64t+8\}$&
$\{0,3t,22t+2,60t+12\}$&$\{0,3t+1,45t+9,51t+11\}$\\
$\{0,4t+1,8t+1,58t+13\}$&$\{0,\frac{7t}{2}+1,5t+2,27t+5\}$&
$\{0,\frac{9t}{2}+1,\frac{71t}{2}+5,\frac{87t}{2}+8\}$\\$\{0,5t+3,25t+5,61t+12\}$&
$\{0,8t+2,8t+4,53t+9\}$&$\{0,11t,\frac{37t}{2}+1,34t+4\}$\\
$\{0,\frac{23t}{2}+1,\frac{59t}{2}+3,\frac{89t}{2}+6\}$&$\{0,14t+1,21t+1,\frac{61t}{2}+4\}$&
$\{0,15t-1,30t+3,41t+5\}$\\$\{0,15t+2,20t+3,45t+6\}$&
$\{0,19t+1,25t+4,56t+10\}$&$\{0,23t+3,36t+4,36t+5\}$\\
$\{0,24t+4,27t+3,31t+5\}$ &&\\\hline
$x=2$ and $t$ is odd &&\\\hline
$\{0,\frac{3t-3}{2},\frac{43t+11}{2},\frac{55t+17}{2}\}$&$\{0,3t-3,22t+6,47t+14\}$&
$\{0,3t,14t+3,30t+9\}$\\
$\{0,15t+2,23t+6,64t+17\}$&
$\{0,3t+3,24t+9,57t+22\}$&$\{0,4t+3,19t+7,69t+27\}$\\
$\{0,\frac{15t+5}{2},\frac{31t+9}{2},25t+9\}$&
$\{0,8t+3,39t+14,44t+16\}$&$\{0,29t+10,37t+15,37t+17\}$\\$\{0,\frac{19t+11}{2},\frac{47t+19}{2},\frac{73t+27}{2}\}$&
$\{0,\frac{19t+13}{2},38t+16,\frac{99t+37}{2}\}$&$\{0,12t+2,12t+3,30t+10\}$\\
$\{0,12t+4,37t+14,43t+16\}$&$\{0,15t+5,\frac{33t+11}{2},\frac{137t+45}{2}\}$&
$\{0,3t+2,36t+11,67t+21\}$\\$\{0,17t+5,\frac{59t+19}{2},34t+12\}$&
$\{0,17t+6,40t+15,\frac{83t+29}{2}\}$&$\{0,19t+8,22t+7,35t+12\}$\\
$\{0,23t+8,26t+9,30t+11\}$&$\{0,7t+2,12t+5,56t+20\}$&\\\hline
$x=2$ and $t$ is even &&\\\hline
$\{0,1,15t+4,64t+19\}$&$\{0,2,12t+4,28t+10\}$&
$\{0,\frac{3t}{2},30t+9,64t+17\}$\\$\{0,\frac{3t}{2}+1,17t+5,\frac{37t}{2}+7\}$&
$\{0,3t-1,32t+9,68t+23\}$&$\{0,4t+3,7t+3,25t+9\}$\\
$\{0,\frac{9t}{2}+2,27t+8,64t+23\}$&$\{0,5t+2,24t+9,36t+12\}$&
$\{0,5t+3,18t+8,37t+17\}$\\$\{0,5t+4,33t+13,50t+20\}$&
$\{0,6t+3,\frac{71t}{2}+12,47t+14\}$&$\{0,7t+2,23t+7,26t+10\}$\\
$\{0,\frac{33t}{2}+5,26t+11,\frac{95t}{2}+17\}$&$\{0,8t+3,30t+10,33t+11\}$&
$\{0,8t+4,23t+9,37t+13\}$\\$\{0,12t+5,15t+2,35t+11\}$&
$\{0,14t+3,\frac{35t}{2}+6,\frac{47t}{2}+10\}$&$\{0,\frac{15t}{2}+2,\frac{55t}{2}+8,\frac{61t}{2}+10\}$\\
$\{0,18t+7,31t+11,56t+21\}$&$\{0,22t+6,30t+11,33t+9\}$&\\\hline
\end{tabular}
\end{center}
\end{table}

\setcounter{table}{1}
\begin{table}[htbp]  \renewcommand\arraystretch{1.1}
\caption{ (Cont.) The remaining $18+x$ base blocks in Lemma \ref{thm:CDF13mod72}}\label{tab1-1}
\begin{center}\tabcolsep 0.03in
\begin{tabular}{|lll|}\hline
$x=3$ and $t$ is odd &&\\\hline
$\{0,1,8t+9,55t+28\}$&$\{0,\frac{3t-3}{2},23t+10,64t+27\}$&
$\{0,14t+7,35t+16,47t+24\}$\\$\{0,3t+3,8t+7,57t+30\}$&
$\{0,4t+2,41t+22,60t+32\}$&$\{0,25t+16,32t+19,43t+25\}$\\
$\{0,6t+4,\frac{35t+19}{2},\frac{137t+69}{2}\}$&$\{0,7t+4,36t+17,43t+23\}$&
$\{0,20t+10,24t+14,57t+32\}$\\$\{0,12t+6,15t+6,23t+11\}$&
$\{0,13t+8,16t+6,41t+21\}$&$\{0,12t+7,16t+10,65t+35\}$\\
$\{0,\frac{3t+1}{2},\frac{49t+27}{2},\frac{87t+49}{2}\}$&$\{0,\frac{33t+19}{2},26t+17,\frac{113t+61}{2}\}$&
$\{0,19t+12,19t+14,22t+11\}$\\$\{0,\frac{45t+21}{2},\frac{55t+27}{2},\frac{107t+55}{2}\}$&
$\{0,\frac{15t+11}{2},20t+13,56t+29\}$&$\{0,24t+13,31t+18,45t+24\}$\\
$\{0,\frac{59t+27}{2},31t+13,\frac{71t+33}{2}\}$&$\{0,5t+5,8t+6,38t+20\}$&
$\{0,25t+14,38t+21,41t+23\}$\\\hline
$x=3$ and $t$ is even &&\\\hline
$\{0,2,8t+10,37t+22\}$&$\{0,\frac{3t}{2}+2,\frac{19t}{2}+8,\frac{101t}{2}+25\}$& $\{0,25t+15,25t+16,28t+13\}$\\
$\{0,\frac{33t}{2}+9,26t+18,\frac{113t}{2}+31\}$&$\{0,\frac{7t}{2}+3,\frac{37t}{2}+9,\frac{89t}{2}+24\}$
&$\{0,\frac{47t}{2}+14,41t+23,\frac{85t}{2}+24\}$\\
$\{0,20t+11,24t+14,27t+13\}$&$\{0,11t+5,17t+9,64t+28\}$&
$\{0,\frac{23t}{2}+5,40t+17,\frac{99t}{2}+27\}$\\
$\{0,25t+14,36t+20,41t+24\}$&
$\{0,13t+8,16t+9,49t+24\}$&$\{0,16t+6,29t+13,36t+18\}$\\
$\{0,16t+7,30t+13,35t+16\}$&$\{0,7t+4,14t+7,19t+12\}$&
$\{0,17t+10,21t+12,51t+26\}$\\
$\{0,18t+10,33t+17,49t+25\}$&
$\{0,8t+5,29t+14,33t+18\}$&$\{0,23t+10,31t+17,54t+28\}$\\
$\{0,3t+3,6t+5,\frac{141t}{2}+37\}$& $\{0,7t+6,19t+11,22t+11\}$&
$\{0,12t+6,\frac{49t}{2}+13,60t+30\}$\\\hline
$x=4$ and $t$ is odd \\\hline
$\{0,1,3t-2,11t+8\}$&$\{0,2,4t+2,53t+33\}$&
$\{0,\frac{3t-3}{2},23t+14,64t+40\}$\\$\{0,3t-1,15t+9,39t+26\}$&
$\{0,3t+1,15t+10,31t+20\}$&$\{0,4t+3,23t+15,40t+27\}$\\
$\{0,4t+4,39t+25,51t+33\}$&$\{0,4t+5,20t+17,42t+32\}$&
$\{0,27t+19,\frac{61t+39}{2},46t+30\}$\\$\{0,5t+7,12t+11,69t+49\}$&
$\{0,7t+5,36t+23,41t+28\}$&$\{0,14t+10,37t+26,41t+27\}$\\
$\{0,\frac{15t+11}{2},\frac{57t+37}{2},\frac{121t+83}{2}\}$&$\{0,8t+5,13t+11,60t+42\}$&
$\{0,14t+11,35t+22,39t+28\}$\\$\{0,8t+8,39t+27,55t+38\}$&
$\{0,\frac{19t+17}{2},\frac{55t+39}{2},45t+31\}$&$\{0,11t+5,30t+19,60t+37\}$\\
$\{0,11t+6,\frac{59t+35}{2},\frac{119t+75}{2}\}$&$\{0,\frac{9t+11}{2},6t+5,27t+20\}$&
$\{0,8t+6,\frac{19t+13}{2},\frac{111t+75}{2}\}$\\
$\{0,7t+6,13t+12,31t+24\}$&&\\\hline
$x=4$ and $t$ is even &&\\\hline
$\{0,1,\frac{19t}{2}+9,39t+26\}$&$\{0,\frac{3t}{2}+1,\frac{19t}{2}+10,\frac{87t}{2}+31\}$&
$\{0,\frac{33t}{2}+11,\frac{83t}{2}+30,\frac{89t}{2}+30\}$\\$\{0,3t-2,30t+17,35t+22\}$&
$\{0,3t+1,8t+8,26t+19\}$&$\{0,\frac{7t}{2}+1,\frac{31t}{2}+10,51t+33\}$\\
$\{0,4t+1,16t+11,64t+43\}$&$\{0,4t+4,27t+20,41t+30\}$&
$\{0,11t+8,27t+18,46t+29\}$\\$\{0,6t+6,23t+17,53t+35\}$&
$\{0,7t+4,11t+7,\frac{121t}{2}+42\}$&$\{0,7t+5,34t+22,67t+43\}$\\
$\{0,7t+6,11t+6,24t+18\}$&$\{0,\frac{15t}{2}+5,20t+16,\frac{43t}{2}+16\}$&
$\{0,8t+5,41t+27,49t+34\}$\\$\{0,11t+5,15t+10,36t+23\}$&
$\{0,\frac{9t}{2}+5,6t+7,\frac{47t}{2}+18\}$&$\{0,12t+8,41t+26,54t+37\}$\\
$\{0,15t+9,36t+24,56t+37\}$&$\{0,3t-3,3t-1,22t+15\}$&
$\{0,19t+12,23t+14,31t+24\}$\\$\{0,21t+11,25t+17,57t+38\}$&&\\\hline
$x=5$ and $t$ is odd &&\\\hline
$\{0,1,3t-2,60t+48\}$&$\{0,\frac{3t-1}{2},\frac{19t+17}{2},\frac{111t+93}{2}\}$&
$\{0,25t+21,36t+28,41t+36\}$\\$\{0,3t+1,15t+12,39t+33\}$&
$\{0,\frac{7t+1}{2},8t+7,11t+6\}$&$\{0,16t+13,33t+27,40t+33\}$\\
$\{0,4t+1,12t+12,47t+39\}$&$\{0,4t+2,35t+28,54t+46\}$&
$\{0,4t+5,8t+8,72t+59\}$\\$\{0,4t+6,31t+28,53t+46\}$&
$\{0,5t+6,35t+29,53t+45\}$&$\{0,6t+6,31t+30,\frac{85t+77}{2}\}$\\
$\{0,6t+7,\frac{47t+41}{2},51t+44\}$&$\{0,8t+5,21t+18,41t+34\}$&
$\{0,8t+6,30t+25,44t+37\}$\\$\{0,\frac{19t+15}{2},11t+8,57t+47\}$&
$\{0,\frac{19t+19}{2},32t+29,51t+46\}$&$\{0,12t+9,25t+23,56t+47\}$\\
$\{0,15t+13,27t+23,60t+53\}$&$\{0,4t,18t+13,23t+20\}$&
$\{0,20t+19,\frac{43t+35}{2},50t+41\}$\\$\{0,23t+19,30t+24,34t+28\}$&
$\{0,3t,\frac{31t+27}{2},34t+29\}$ &\\\hline
\end{tabular}
\end{center}
\end{table}

\setcounter{table}{1}
\begin{table}[t]  \renewcommand\arraystretch{1.1}
\caption{ (Cont.) The remaining $18+x$ base blocks in Lemma \ref{thm:CDF13mod72}}\label{tab1-2}
\begin{center}\tabcolsep 0.03in
\begin{tabular}{|lll|}\hline

$x=5$ and $t$ is even &&\\\hline
$\{0,1,\frac{19t}{2}+10,39t+32\}$&$\{0,\frac{3t}{2}+1,24t+20,64t+51\}$&
$\{0,19t+20,22t+20,37t+34\}$\\$\{0,\frac{7t}{2}+1,\frac{31t}{2}+13,\frac{55t}{2}+23\}$&
$\{0,4t+4,15t+11,52t+44\}$&$\{0,13t+14,17t+14,44t+38\}$\\
$\{0,27t+23,30t+22,35t+29\}$&$\{0,5t+6,23t+19,51t+43\}$&
$\{0,6t+7,33t+28,65t+55\}$\\$\{0,\frac{35t}{2}+13,19t+15,\frac{135t}{2}+55\}$&
$\{0,11t+8,\frac{71t}{2}+29,47t+37\}$&$\{0,12t+9,\frac{61t}{2}+24,\frac{111t}{2}+47\}$\\
$\{0,12t+14,16t+15,42t+38\}$&$\{0,4t+5,8t+11,68t+59\}$&
$\{0,15t+12,23t+20,56t+47\}$\\$\{0,16t+13,30t+25,41t+34\}$&
$\{0,\frac{15t}{2}+5,20t+18,\frac{43t}{2}+18\}$&$\{0,18t+15,22t+18,41t+36\}$\\
$\{0,19t+16,38t+33,50t+44\}$&$\{0,3t+1,8t+9,11t+6\}$&
$\{0,21t+15,21t+17,57t+48\}$\\$\{0,25t+22,31t+30,43t+38\}$&
$\{0,4t+7,31t+29,69t+63\}$&\\\hline
\end{tabular}
\end{center}
\end{table}

\begin{Theorem}\label{thm:CDF}
There exists a $(v,4,1)$-CDF if and only if $v\equiv 1\pmod{12}$ and $v\neq 25$.
\end{Theorem}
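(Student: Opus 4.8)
The plan is to deduce Theorem~\ref{thm:CDF} from the explicit constructions already established, reserving all the real effort for those constructions. First I would dispose of necessity. If $\F$ is a $(v,4,1)$-CDF then $\Delta\F=\Z_v\setminus\{0\}$ has $v-1$ elements while each base block contributes exactly $4\cdot 3=12$ differences; hence $\F$ consists of $(v-1)/12$ base blocks and $v\equiv 1\pmod{12}$. To rule out $v=25$, note that by Lemma~\ref{lem:CDF-CD}(1) a $(25,4,1)$-CDF would produce a cyclic $(25,4,1)$-design, and no such design exists \cite{cm}; therefore no $(25,4,1)$-CDF exists.

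For sufficiency I would split according to the residue of $v$ modulo $72$. Every $v\equiv 1\pmod{12}$ satisfies exactly one of $v\equiv 1\pmod{72}$ or $v\equiv 13,25,37,49,61\pmod{72}$. In the first case Lemma~\ref{thm:CDF1mod72} provides a $(v,4,1)$-CDF; in the second case, since $v\neq 25$, Lemma~\ref{thm:CDF13mod72} does. As these two families of residue classes together cover all $v\equiv 1\pmod{12}$ and the only excluded value $v=25$ is exactly the one barred by the second lemma, this finishes the proof.

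The hard part is not Theorem~\ref{thm:CDF} itself but Lemmas~\ref{thm:CDF1mod72} and~\ref{thm:CDF13mod72}: for each residue class one must exhibit an infinite family of roughly $6t-18$ ``generic'' base blocks of the shape~\eqref{eqn:1}, chosen so that their positive-difference intervals are pairwise disjoint, and then adjoin a bounded number ($18+x$) of sporadic base blocks --- split according to the parity of $t$ --- to mop up the residual differences so that $\Delta\F=\Z_v\setminus\{0\}$. Checking the latter is a finite but delicate interval-arithmetic bookkeeping task (hence the accompanying GAP code). Once those lemmas are in hand, the remaining steps are a one-line counting argument for necessity and a one-line case analysis modulo $72$ for sufficiency; the only subtlety to watch is that the union of the classes $1,13,25,37,49,61\pmod{72}$ is precisely $1\pmod{12}$, with $v=25$ the single value that genuinely needs removing.
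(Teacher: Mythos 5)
Your proposal is correct and follows essentially the same route as the paper: necessity by the counting argument $(v-1)/12$ base blocks plus the known non-existence for $v=25$ from \cite{cm}, and sufficiency by combining Lemma~\ref{thm:CDF1mod72} (for $v\equiv 1\pmod{72}$) with Lemma~\ref{thm:CDF13mod72} (for $v\equiv 13,25,37,49,61\pmod{72}$, $v\neq 25$), which together exhaust $v\equiv 1\pmod{12}$. Your derivation of the $v=25$ exclusion via Lemma~\ref{lem:CDF-CD}(1) is a harmless elaboration of the same citation.
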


\begin{proof} A $(v,4,1)$-CDF contains $(v-1)/12$ base blocks, so $v\equiv 1\pmod{12}$. For the sufficiency, it is known that a $(25,4,1)$-CDF does not exist by \cite{cm}. Combine the results of Lemma \ref{thm:CDF1mod72} and Lemma \ref{thm:CDF13mod72} to complete the proof.
\end{proof}

Next using similar techniques to those in the proof of Lemma \ref{thm:CDF13mod72}, we establish the existence of a $(v,4,4,1)$-CDF for any $v\equiv 4\pmod{12}$ and $v\not\in\{16,28\}$.

\begin{Theorem}\label{thm:CDF4mod12}
There exists a $(v,4,4,1)$-CDF if and only if $v\equiv 4\pmod{12}$ except for the two definite exceptions of $v=16$ and $28$.
\end{Theorem}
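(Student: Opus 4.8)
The plan is to mirror the proof of Lemma~\ref{thm:CDF13mod72}, the only change being that the target difference set is now $\Z_v$ with the order-$4$ subgroup $S:=\{0,v/4,v/2,3v/4\}$ removed, rather than $\Z_v\setminus\{0\}$. Necessity is immediate: a $(v,4,4,1)$-CDF has $(v-4)/12$ base blocks, forcing $v\equiv 4\pmod{12}$, and the nonexistence for $v=16$ and $v=28$ is quoted from \cite{cm}. For sufficiency I would first dispose of the small orders from the tables in \cite[Theorem~16.28]{abel}, then write every large $v\equiv 4\pmod{12}$ as $v=72t+12x+4$ with $x\in\{0,1,2,3,4,5\}$ and $t$ large; the CDF to be built then has $6t+x$ base blocks.

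As before the construction has two layers. The first $6t-18$ base blocks are instances of the Yang--Lin template \eqref{eqn:1},
\[
\{0,\ \alpha_{r1}t+\alpha_{r2}+i,\ \beta_{r1}t+\beta_{r2}+2i,\ \gamma_{r1}t+\gamma_{r2}+3i\},\qquad 1\le i\le t-2,\ i\ne\lfloor t/2\rfloor,
\]
with the leading coefficients $\alpha_{r1},\beta_{r1},\gamma_{r1}$ kept as in Lemma~\ref{lem:Yang} and the six additive constants $\alpha_{r2},\beta_{r2},\gamma_{r2}$ re-tuned for each residue $x$ (and each parity of $t$) through a short table, exactly in the spirit of the analysis following Lemma~\ref{thm:CDF13mod72}: a $(12x+3)$-element shift now appears when a positive-difference interval wraps to its negative counterpart, and the constants must be moved to absorb it and keep the six families of positive differences supported on pairwise disjoint arithmetic progressions. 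The quantitative point that makes the scheme work is that, after this tuning, the set of still-uncovered differences is $S$ together with a bounded ``defect'' set of exceptional values clustered near fixed multiples of $\lfloor t/2\rfloor$, of size independent of $t$ --- the same phenomenon exploited in Lemma~\ref{thm:CDF1mod72}. The remaining $18+x$ base blocks are then obtained by a finite computer search, one search per pair $(x,\text{parity of }t)$, arranged so that $\Delta\F=\Z_v\setminus S$; correctness for all $t$ is certified by the GAP code of \cite{zfw}. By Lemma~\ref{lem:CDF-CD}(2), adjoining the short block $S$ to such an $\F$ yields a cyclic $(v,4,1)$-design, although that step is not needed for the statement itself.

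The main obstacle, just as in Lemma~\ref{thm:CDF13mod72}, is the by-hand design of the $6t-18$ first-layer blocks for each of the six values of $x$: one must pick the eighteen additive constants so that (i) the six interval families tile an initial segment $[1,\,36t+O(1)]$ with disjoint supports, (ii) none of $v/4=18t+3x+1$, $v/2=36t+6x+2$, $3v/4=54t+9x+3$ is ever produced as a difference, and (iii) the residual defect set stays small and structured enough that the follow-up search terminates in reasonable time. The genuinely new feature compared with Lemma~\ref{thm:CDF13mod72} is that the prescribed leave is a subgroup rather than $\{0\}$; this is handled by deliberately leaving $v/4$ (hence $3v/4$) and $v/2$ uncovered from the outset --- $v/2$ already sits at the top of the positive-difference range and $v/4$ near its midpoint --- and steering the hand-tuning so that these three elements fall outside every interval used.
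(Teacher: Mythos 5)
Your plan coincides with the paper's actual proof: the same necessity count, the same quotation of \cite{cm} for $v\in\{16,28\}$, the same decomposition $v=72t+12x+4$ with $0\le x\le 5$, the same two-layer construction ($6t-18$ template blocks with re-tuned constants plus $18+x$ computer-found blocks split by the parity of $t$), and the same observation that the leave must be the order-$4$ subgroup. The only minor discrepancy is that the paper settles the small orders $v\le 208$ via Chen and Wei \cite{cw} rather than \cite[Theorem 16.28]{abel}; otherwise the approach is essentially identical, with the understanding that the explicit tables of constants and sporadic blocks still have to be exhibited and machine-verified as in \cite{zfw}.
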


\proof A $(v,4,4,1)$-CDF contains $(v-4)/12$ base blocks, so $v\equiv 4\pmod{12}$. For the sufficiency, it is known that a $(v,4,4,1)$-CDF with $v\in\{16,28\}$ does not exist by \cite{cm}. For $v\equiv 4\pmod{12}$, $v\not\in\{16,28\}$ and $v\leq 208$, a $(v,4,4,1)$-CDF exists by \cite{cw}. For $v>208$, let $v=72t+12x+4$ where $t>2$ and $0\leq x\leq 5$. A $(v,4,4,1)$-CDF, $\F$, contains $6t+x$ base blocks. The first $6t-18$ base blocks are listed below:
\begin{center}
\begin{tabular}{llll}
$\{0$, & $43t+a_1+i$, & $31t+a_2+2i$, & $8t+a_3+3i\}$,\\
$\{0$, & $23t+b_1+i$, & $5t+b_2+2i$, & $8t+b_3+3i\}$,\\
$\{0$, & $41t+c_1+i$, & $25t+c_2+2i$, & $8t+c_3+3i\}$,\\
$\{0$, & $35t+d_1+i$, & $5t+d_2+2i$, & $d_3+3i\}$,\\
$\{0$, & $47t+e_1+i$, & $19t+e_2+2i$, & $e_3+3i\}$,\\
$\{0$, & $21t+f_1+i$, & $13t+f_2+2i$, & $f_3+3i\}$,\\
\end{tabular}
\end{center}
where $1\leq i\leq t-2$, $i\neq \lfloor t/2\rfloor$, and $a_j,b_j,c_j,d_j,e_j,f_j$ for $1\leq j\leq 3$ are given in the following table:
\begin{center}
\begin{tabular}{|l|lll|lll|lll|lll|lll|lll|}\hline
$x$ & $a_1$ & $a_2$ & $a_3$ & $b_1$ & $b_2$ & $b_3$ & $c_1$ & $c_2$ & $c_3$ & $d_1$ & $d_2$ & $d_3$ & $e_1$ & $e_2$ & $e_3$ & $f_1$ & $f_2$ & $f_3$\\\hline
$0$ & $2$ & $2$ & $2$ & $2$ & $1$ & $1$ & $1$ & $2$ & $0$ & $0$ & $0$ & $1$ & $3$ & $2$ & $2$ & $1$ & $0$ & $0$ \\
$1$ & $10$ & $7$ & $5$ & $6$ & $3$ & $4$ & $9$ & $6$ & $3$ & $7$ & $2$ & $1$ & $11$ & $5$ & $2$ & $4$ & $1$ & $0$\\
$2$ & $18$ & $14$ & $8$ & $10$ & $4$ & $7$ & $17$ & $11$ & $6$ & $12$ & $3$ & $1$ & $18$ & $9$ & $2$ & $8$ & $6$ & $0$\\
$3$ & $26$ & $20$ & $10$ & $14$ & $5$ & $8$ & $26$ & $18$ & $9$ & $18$ & $4$ & $1$ & $25$ & $11$ & $2$ & $10$ & $7$ & $0$\\
$4$ & $32$ & $24$ & $10$ & $18$ & $5$ & $8$ & $30$ & $20$ & $9$ & $24$ & $4$ & $1$ & $35$ & $17$ & $2$ & $16$ & $11$ & $0$\\
$5$ & $41$ & $31$ & $11$ & $21$ & $6$ & $9$ & $38$ & $25$ & $10$ & $30$ & $5$ & $1$ & $42$ & $18$ & $2$ & $17$ & $13$ & $0$
\\\hline
 \end{tabular} .
\end{center}
The remaining $18+x$ base blocks are provided in Table \ref{tab2} according to the parity of $t$. Then one can check that $\Delta \F=\Z_v\setminus\{0,v/4,v/2,3v/4\}$, and so $\F$ is a $(v,4,4,1)$-CDF. \qed

\begin{table}[htbp] \renewcommand\arraystretch{1.1}
\caption{The remaining $18+x$ base blocks in Theorem \ref{thm:CDF4mod12}}\label{tab2}
\begin{center}
\begin{tabular}{|lll|}\hline
$x=0$ and $t$ is odd &&\\\hline
$\{0,1,36t+1,67t+4\}$&$\{0,2,15t,45t+4\}$&
$\{0,\frac{3t-3}{2},14t-1,\frac{83t+1}{2}\}$\\$\{0,\frac{3t-1}{2},11t-2,\frac{95t+5}{2}\}$&
$\{0,\frac{3t+1}{2},13t+1,20t+1\}$&$\{0,3t-3,16t,19t\}$\\
$\{0,3t-2,35t-1,42t+1\}$&$\{0,3t-1,42t+3,53t+3\}$&
$\{0,6t-1,\frac{31t-1}{2},65t+3\}$\\$\{0,7t-2,21t,44t+2\}$&
$\{0,7t-1,11t-1,34t\}$&$\{0,\frac{15t+3}{2},25t+3,\frac{137t+9}{2}\}$\\
$\{0,\frac{19t-1}{2},\frac{43t+1}{2},\frac{111t+3}{2}\}$&$\{0,12t,16t-1,64t+2\}$&
$\{0,16t+1,22t+1,64t+3\}$\\$\{0,16t+2,21t+1,49t+4\}$&
$\{0,19t+3,30t,55t+2\}$&$\{0,25t+1,\frac{59t+1}{2},53t+2\}$\\\hline
$x=0$ and $t$ is even &&\\\hline
$\{0,1,36t+5,47t+2\}$&$\{0,\frac{3t}{2}+1,30t+3,64t+3\}$&
$\{0,3t-3,3t-1,\frac{9t}{2}-1\}$\\$\{0,3t-2,40t+2,56t+2\}$&
$\{0,\frac{7t}{2},\frac{19t}{2},22t\}$&$\{0,4t-1,23t,61t+5\}$\\
$\{0,5t-1,33t+1,50t+3\}$&$\{0,5t,32t+1,65t+3\}$&
$\{0,5t+1,21t,47t+3\}$\\$\{0,6t+1,13t+3,65t+5\}$&
$\{0,7t-2,11t-2,35t-1\}$&$\{0,7t,\frac{47t}{2}+2,\frac{99t}{2}+4\}$\\
$\{0,\frac{15t}{2}+1,\frac{61t}{2}+3,58t+4\}$&$\{0,8t+2,31t+3,61t+4\}$&
$\{0,\frac{19t}{2}+1,21t+1,57t+4\}$\\$\{0,11t+1,24t+2,27t+2\}$&
$\{0,14t+1,30t+4,42t+4\}$&$\{0,\frac{31t}{2}-1,33t,52t+3\}$\\\hline
$x=1$ and $t$ is odd &&\\\hline
$\{0,1,8t+4,23t+3\}$&$\{0,\frac{3t-1}{2},25t+5,30t+6\}$&$\{0,\frac{3t+1}{2},20t+4,57t+13\}$\\
$\{0,3t-3,3t-1,64t+11\}$&$\{0,\frac{7t+1}{2},21t+4,\frac{125t+25}{2}\}$&$\{0,4t,29t+6,65t+16\}$\\
$\{0,4t+1,7t+1,23t+5\}$&$\{0,4t+2,31t+7,57t+12\}$&$\{0,\frac{9t+3}{2},14t+3,27t+4\}$\\
$\{0,5t+3,23t+6,49t+12\}$&$\{0,6t+2,30t+8,45t+10\}$&$\{0,7t+3,40t+10,51t+13\}$\\
$\{0,7t+4,39t+11,42t+9\}$&$\{0,\frac{15t+7}{2},20t+5,\frac{43t+7}{2}\}$&$\{0,11t+1,16t+3,35t+6\}$\\
$\{0,\frac{23t+7}{2},\frac{89t+19}{2},\frac{95t+21}{2}\}$&$\{0,13t+2,30t+5,47t+9\}$&$\{0,19t+5,34t+6,41t+8\}$\\
$\{0,27t+7,\frac{73t+19}{2},\frac{85t+21}{2}\}$ &&\\\hline
$x=1$ and $t$ is even &&\\\hline
$\{0,2,49t+13,57t+17\}$&$\{0,\frac{3t}{2}+2,19t+5,\frac{135t}{2}+15\}$&$\{0,3t-3,11t,\frac{25t}{2}+1\}$\\
$\{0,3t-1,14t+1,59t+12\}$&$\{0,3t,48t+10,55t+12\}$&$\{0,\frac{7t}{2}+1,5t+1,47t+11\}$\\
$\{0,4t,\frac{23t}{2}+3,34t+5\}$&$\{0,5t+2,36t+10,57t+13\}$&$\{0,5t+3,16t+4,49t+12\}$\\
$\{0,6t+2,33t+6,\frac{85t}{2}+11\}$&$\{0,7t+4,39t+11,42t+9\}$&$\{0,8t+5,30t+8,49t+14\}$\\
$\{0,11t+3,24t+5,45t+9\}$&$\{0,13t+1,17t+3,36t+7\}$&$\{0,14t+2,29t+6,47t+9\}$\\
$\{0,15t+2,19t+3,26t+6\}$&$\{0,\frac{31t}{2}+3,25t+6,\frac{111t}{2}+13\}$
&$\{0,\frac{49t}{2}+5,\frac{55t}{2}+6,46t+9\}$\\
$\{0,28t+6,35t+6,35t+7\}$ &&\\\hline
$x=2$ and $t$ is odd &&\\\hline
$\{0,1,\frac{3t-1}{2},64t+21\}$&$\{0,2,3t+3,15t+7\}$&$\{0,3t-1,35t+11,54t+20\}$\\
$\{0,3t+2,31t+11,55t+21\}$&$\{0,\frac{7t+5}{2},\frac{19t+11}{2},\frac{85t+37}{2}\}$
&$\{0,\frac{9t+5}{2},\frac{15t+5}{2},\frac{99t+43}{2}\}$\\
$\{0,5t+2,23t+8,42t+18\}$&$\{0,5t+4,49t+22,52t+20\}$&$\{0,6t+2,14t+4,\frac{101t+41}{2}\}$\\
$\{0,7t+3,14t+5,50t+21\}$&$\{0,8t+4,\frac{19t+9}{2},57t+22\}$&$\{0,8t+5,25t+10,41t+16\}$\\
$\{0,8t+6,23t+9,44t+17\}$&$\{0,11t+5,19t+8,24t+11\}$&$\{0,\frac{23t+9}{2},\frac{31t+13}{2},\frac{83t+33}{2}\}$\\
$\{0,12t+5,38t+16,\frac{111t+45}{2}\}$&$\{0,12t+7,15t+4,47t+17\}$&$\{0,\frac{25t+13}{2},\frac{47t+19}{2},\frac{107t+41}{2}\}$\\
$\{0,17t+6,36t+13,48t+19\}$&$\{0,23t+7,27t+10,34t+11\}$&\\\hline
\end{tabular}
\end{center}
\end{table}

\setcounter{table}{2}
\begin{table}[htbp] \renewcommand\arraystretch{1.1}
\caption{ (Cont.) The remaining $18+x$ base blocks in Theorem \ref{thm:CDF4mod12}}\label{tab2-1}
\begin{center}\tabcolsep 0.03in
\begin{tabular}{|lll|}\hline
$x=2$ and $t$ is even &&\\\hline
$\{0,1,\frac{3t}{2}+1,64t+21\}$&$\{0,2,3t-1,15t+6\}$&$\{0,\frac{3t}{2}+2,31t+11,64t+23\}$\\
$\{0,3t-2,41t+16,64t+22\}$&$\{0,3t,36t+11,67t+25\}$&$\{0,3t+1,\frac{43t}{2}+8,45t+18\}$\\
$\{0,3t+2,17t+5,54t+22\}$&$\{0,3t+3,26t+12,42t+18\}$&$\{0,\frac{9t}{2}+2,\frac{19t}{2}+6,\frac{113t}{2}+22\}$\\
$\{0,6t+4,\frac{57t}{2}+10,65t+26\}$&$\{0,7t+1,35t+10,53t+18\}$&$\{0,\frac{15t}{2}+2,11t+5,68t+26\}$\\
$\{0,8t+2,44t+18,50t+21\}$&$\{0,8t+4,25t+11,48t+18\}$&$\{0,12t+4,27t+9,67t+26\}$\\
$\{0,12t+5,31t+13,61t+25\}$&$\{0,12t+6,19t+9,48t+19\}$&$\{0,\frac{25t}{2}+6,37t+16,57t+25\}$\\
$\{0,14t+6,\frac{61t}{2}+11,48t+17\}$&$\{0,21t+7,25t+10,44t+17\}$&\\\hline
$x=3$ and $t$ is odd &&\\\hline
$\{0,1,25t+18,69t+43\}$&$\{0,\frac{3t-3}{2},\frac{19t+13}{2},\frac{19t+17}{2}\}$&$\{0,\frac{3t-1}{2},31t+14,\frac{71t+35}{2}\}$\\
$\{0,3t,\frac{61t+29}{2},55t+30\}$&$\{0,3t+1,39t+22,\frac{113t+63}{2}\}$&$\{0,\frac{7t+5}{2},15t+9,\frac{141t+79}{2}\}$\\
$\{0,4t+3,11t+7,33t+17\}$&$\{0,4t+4,36t+22,39t+21\}$&$\{0,5t+4,8t+7,24t+16\}$\\
$\{0,5t+5,25t+16,37t+24\}$&$\{0,7t+2,31t+15,56t+30\}$&$\{0,\frac{15t+7}{2},\frac{43t+19}{2},31t+17\}$\\
$\{0,8t+4,19t+9,22t+11\}$&$\{0,8t+5,12t+7,57t+32\}$&$\{0,8t+9,23t+14,49t+30\}$\\
$\{0,12t+6,31t+16,55t+31\}$&$\{0,\frac{25t+15}{2},35t+18,41t+22\}$&$\{0,13t+7,28t+14,64t+37\}$\\
$\{0,15t+6,21t+9,34t+17\}$&$\{0,16t+8,24t+14,46t+23\}$&$\{0,23t+11,30t+14,41t+20\}$\\\hline
$x=3$ and $t$ is even &&\\\hline
$\{0,1,8t+10,11t+7\}$&$\{0,2,37t+24,68t+38\}$&$\{0,\frac{3t}{2},\frac{45t}{2}+10,\frac{61t}{2}+14\}$\\
$\{0,\frac{3t}{2}+2,\frac{19t}{2}+10,\frac{113t}{2}+32\}$&$\{0,3t-2,15t+6,46t+23\}$&$\{0,3t-1,19t+9,22t+11\}$\\
$\{0,3t+1,16t+8,34t+17\}$&$\{0,3t+3,23t+13,31t+18\}$&$\{0,\frac{7t}{2}+3,33t+17,66t+35\}$\\
$\{0,\frac{9t}{2}+3,\frac{15t}{2}+3,\frac{23t}{2}+6\}$&$\{0,5t+3,36t+23,58t+33\}$&$\{0,5t+4,25t+15,49t+30\}$\\
$\{0,6t+4,\frac{55t}{2}+14,37t+23\}$&$\{0,7t+2,15t+9,30t+14\}$&$\{0,7t+4,15t+7,39t+21\}$\\
$\{0,8t+6,23t+14,\frac{49t}{2}+15\}$&$\{0,12t+7,24t+13,50t+31\}$&$\{0,13t+8,\frac{73t}{2}+22,49t+29\}$\\
$\{0,16t+9,30t+15,47t+24\}$&$\{0,\frac{35t}{2}+9,\frac{57t}{2}+14,\frac{107t}{2}+31\}$&$\{0,19t+11,24t+16,51t+31\}$\\\hline
$x=4$ and $t$ is odd &&\\\hline
$\{0,\frac{3t-3}{2},\frac{19t+15}{2},\frac{101t+73}{2}\}$&$\{0,\frac{3t-1}{2},11t+6,\frac{119t+81}{2}\}$
&$\{0,\frac{3t+1}{2},26t+18,\frac{107t+73}{2}\}$\\
$\{0,3t-1,27t+17,54t+36\}$&$\{0,3t+1,27t+18,39t+29\}$&$\{0,3t+2,26t+19,\frac{135t+97}{2}\}$\\
$\{0,3t+3,8t+8,39t+30\}$&$\{0,5t+3,18t+14,47t+35\}$&$\{0,5t+4,21t+16,28t+19\}$\\
$\{0,6t+3,\frac{45t+29}{2},\frac{85t+63}{2}\}$&$\{0,6t+4,41t+28,55t+38\}$&$\{0,7t+5,11t+8,23t+16\}$\\
$\{0,8t+6,12t+10,15t+10\}$&$\{0,8t+7,38t+28,42t+30\}$&$\{0,\frac{19t+17}{2},\frac{35t+27}{2},\frac{113t+83}{2}\}$\\
$\{0,11t+5,19t+15,54t+37\}$&$\{0,17t+12,17t+13,36t+29\}$&$\{0,\frac{23t+17}{2},15t+11,40t+29\}$\\
$\{0,12t+9,40t+30,47t+32\}$&$\{0,12t+12,15t+9,65t+46\}$&$\{0,13t+12,16t+10,55t+41\}$\\
$\{0,11t+7,11t+9,39t+27\}$&&\\\hline
$x=4$ and $t$ is even &&\\\hline
$\{0,1,\frac{3t}{2}+2,64t+44\}$&$\{0,2,19t+18,69t+55\}$&$\{0,\frac{3t}{2},5t+3,\frac{33t}{2}+11\}$\\
$\{0,3t-2,11t+8,15t+10\}$&$\{0,29t+21,32t+24,36t+27\}$&$\{0,\frac{9t}{2}+3,27t+17,32t+21\}$\\
$\{0,6t+4,23t+18,44t+34\}$&$\{0,20t+17,31t+22,55t+39\}$&$\{0,12t+11,39t+30,56t+42\}$\\
$\{0,\frac{19t}{2}+8,31t+24,58t+42\}$&$\{0,\frac{31t}{2}+10,23t+15,\frac{83t}{2}+30\}$&
$\{0,12t+8,25t+19,69t+51\}$\\
$\{0,8t+5,12t+9,47t+32\}$&$\{0,13t+12,19t+17,30t+23\}$&$\{0,14t+11,30t+22,38t+29\}$\\
$\{0,\frac{19t}{2}+9,\frac{25t}{2}+11,\frac{85t}{2}+32\}$&$\{0,\frac{35t}{2}+13,\frac{49t}{2}+17,61t+45\}$&$\{0,18t+14,25t+17,54t+37\}$\\
$\{0,7t+2,25t+18,46t+33\}$&$\{0,23t+14,28t+19,35t+24\}$&$\{0,23t+17,31t+23,42t+32\}$\\
$\{0,3t,23t+16,39t+28\}$&&\\\hline
\end{tabular}
\end{center}
\end{table}

\setcounter{table}{2}
\begin{table}[htbp] \renewcommand\arraystretch{1.1}
\caption{ (Cont.) The remaining $18+x$ base blocks in Theorem \ref{thm:CDF4mod12}}\label{tab2-2}
\begin{center}\tabcolsep 0.03in
\begin{tabular}{|lll|}\hline
$x=5$ and $t$ is odd &&\\\hline
$\{0,1,3t-2,36t+30\}$&$\{0,2,24t+23,64t+58\}$&$\{0,\frac{3t-3}{2},29t+23,64t+53\}$\\
$\{0,\frac{3t-1}{2},3t,\frac{43t+33}{2}\}$&$\{0,11t+10,16t+14,33t+29\}$&$\{0,4t+2,\frac{33t+31}{2},\frac{137t+123}{2}\}$\\
$\{0,4t+3,7t+4,58t+52\}$&$\{0,4t+5,16t+16,61t+58\}$&$\{0,5t+5,\frac{45t+41}{2},27t+25\}$\\
$\{0,6t+4,31t+27,60t+51\}$&$\{0,15t+13,31t+26,61t+53\}$&$\{0,22t+21,30t+25,43t+38\}$\\
$\{0,\frac{19t+15}{2},\frac{57t+47}{2},\frac{83t+75}{2}\}$&$\{0,11t+7,\frac{71t+59}{2},47t+40\}$&$\{0,11t+8,23t+20,64t+54\}$\\
$\{0,3t-1,8t+5,53t+45\}$&$\{0,12t+14,19t+17,50t+48\}$&$\{0,14t+13,38t+33,41t+36\}$\\
$\{0,6t+5,\frac{59t+51}{2},\frac{125t+111}{2}\}$&$\{0,15t+14,23t+21,39t+36\}$&$\{0,22t+17,30t+26,47t+42\}$\\
$\{0,7t+5,11t+9,44t+40\}$&$\{0,26t+23,29t+25,41t+35\}$&\\\hline
$x=5$ and $t$ is even &&\\\hline
$\{0,2,39t+36,60t+52\}$&$\{0,19t+19,22t+16,22t+17\}$&$\{0,\frac{3t}{2}+1,\frac{19t}{2}+9,25t+22\}$\\
$\{0,3t,\frac{15t}{2}+4,11t+7\}$&$\{0,3t+1,14t+12,22t+18\}$&$\{0,3t+2,\frac{61t}{2}+26,55t+48\}$\\
$\{0,3t+3,14t+13,50t+44\}$&$\{0,4t+4,11t+8,19t+18\}$&$\{0,5t+4,43t+39,49t+44\}$\\
$\{0,5t+5,27t+24,31t+29\}$&$\{0,5t+6,20t+17,32t+28\}$&$\{0,6t+6,47t+40,55t+49\}$\\
$\{0,7t+3,29t+24,36t+29\}$&$\{0,12t+13,15t+12,48t+43\}$&$\{0,8t+11,16t+16,32t+31\}$\\
$\{0,11t+6,41t+33,54t+47\}$&$\{0,11t+9,37t+34,53t+48\}$&$\{0,\frac{23t}{2}+10,30t+26,\frac{119t}{2}+51\}$\\
$\{0,8t+4,21t+17,39t+32\}$&$\{0,15t+13,\frac{33t}{2}+15,39t+35\}$&$\{0,\frac{35t}{2}+15,\frac{43t}{2}+17,\frac{125t}{2}+54\}$\\
$\{0,\frac{3t}{2},\frac{57t}{2}+23,38t+34\}$&$\{0,25t+25,37t+35,41t+38\}$&\\\hline
\end{tabular}
\end{center}
\end{table}

\section{Summary and proofs of the main theorems}

Now we are to prove Theorems \ref{thm:main} and \ref{thm:main-OOC}.

\vspace{0.38cm}
\noindent {\bf Proof of Theorem \ref{thm:main}:} A cyclic $(v,4,1)$-design exists only if $v\equiv 1,4\pmod{12}$ and $v\not\in \{16,25,28\}$ by \cite{cm}. For the sufficiency, it follows from Lemma \ref{lem:CDF-CD}(1) and Theorem \ref{thm:CDF} that a cyclic $(v,4,1)$-design exists for any $v\equiv 1 \pmod{12}$ and $v\not\neq 25$. By Lemma \ref{lem:CDF-CD}(2) and Theorem \ref{thm:CDF4mod12}, a cyclic $(v,4,1)$-design exists for any $v\equiv 4 \pmod{12}$ and $v\not\in\{16,28\}$. \qed

\vspace{0.38cm}
\noindent {\bf Proof of Theorem \ref{thm:main-OOC}:} By Lemma \ref{lem:CDP-OOC}, a $(v,4,1)$-CDP with $b$ base blocks is equivalent to a $(v,4,1)$-OOC with $b$ codewords. A $(v,k,1)$-CDF contains $(v-1)/k(k-1)$ base blocks, and a $(v,k,k,1)$-CDF contains $(v-k)/k(k-1)$ base blocks. An optimal $(v,4,1)$-OOC contains $\lfloor (v-1)/12\rfloor$ codewords. It follows from Theorem \ref{thm:main} that Theorem \ref{thm:main-OOC} holds. \qed

\vspace{0.38cm}

Many open problems were raised by Reid and Rosa \cite[Section 14]{rr} in their survey on $(v,4,1)$-designs, where the first two problems are to find a direct proof of the existence of $(v,4,1)$-designs and to show that a cyclic $(v,4,1)$-design exists for all admissible $v\geq 37$, respectively. This paper gives a solution to both of the problems.

A further research topic is to examine the existence of a $(v,4,1)$-CDP with $\lfloor (v-1)/12\rfloor$ base blocks for any $v\not\equiv 1,4\pmod{12}$, which yields an optimal $(v,4,1)$-OOC.
There should be no serious obstacle to extending our technique in Section 2 to construct them, but much more time will be required to search for the sporadic base blocks. Also we believe that our technique can be employed to construct 1-rotational $(v,4,1)$-designs (see \cite{rr} for the definition), which can be seen as a special kind of $(v,4,1)$-CDPs.

A more interesting topic is to examine the existence of cyclic $(v,k,1)$-designs with $k\geq 5$. A possible approach is to require that each base block is of the form $$\{0,\  \alpha_{1,1}t+\alpha_{1,2}+i,\ \alpha_{2,1}t+\alpha_{2,2}+2i,\ \ldots,\ \alpha_{k-1,1}t+
\alpha_{k-1,2}+(k-1)i\}$$
for some $t$ that is related to $v$. We leave it as an open problem in the more challenging case.

\subsection*{Acknowledgements}

The authors thank the editor, Marco Buratti, and the anonymous referees for their valuable comments and suggestions that helped improve the equality of the paper.

\end{document}